\DeclareMathOperator{\PGL}{PGL}
\DeclareMathOperator{\Rat}{Rat}
\DeclareMathOperator{\Res}{Res}
\DeclareMathOperator{\Fix}{Fix}
\newcommand{\QQ}{\mathbb{Q}}
\newcommand{\CC}{\mathbb{C}}
\newcommand{\PP}{\mathbb{P}}
\newcommand{\ZZ}{\mathbb{Z}}
\newtheorem{theorem}{Theorem}[section]
\newtheorem{lemma}[theorem]{Lemma}
\newtheorem{proposition}[theorem]{Proposition}
\newtheorem{corollary}[theorem]{Corollary}
\newenvironment{definition}[1][Definition]{\begin{trivlist}
\item[\hskip \labelsep {\bfseries #1}]}{\end{trivlist}}
\newenvironment{example}[1][Example]{\begin{trivlist}
\item[\hskip \labelsep {\bfseries #1}]}{\end{trivlist}}
\numberwithin{equation}{section}
\begin{document}

\title{Potential good reduction of degree 2 rational maps}

\author{Diane Yap}
\address{Department of Mathematics, University of Hawaii, 
Honolulu, HI 96822}
\email{dianey@math.hawaii.edu}

\begin{abstract}
We give a complete characterization of degree two rational maps with potential good reduction over local fields. We show this happens exactly when the map corresponds to an integral point in the moduli space $M_2$. We detail an algorithm by which to conjugate any degree two rational map corresponding to an integral point in $M_2$ into a map with unit resultant. The local fields result is used to solve the same problem for fields over a principal ideal domain. Some additional results are given for degree 2 rational maps over $\QQ$.
\end{abstract}

 \maketitle

\section{Introduction}

Motivation for this problem comes from the following result from elliptic curves:
\begin{theorem}\label{eccm} \cite{AEC}\textit{Let $K$ be a local field with a discrete valuation, and let $E/K$ be an elliptic curve. Then $E$ has potential good reduction if and only if its j-invariant is integral}\end{theorem}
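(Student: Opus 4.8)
This is the classical criterion for potential good reduction of elliptic curves, so the plan is to prove the biconditional by treating the two implications separately. Let $R$ be the valuation ring of $K$, let $v$ be the normalized valuation, $\pi$ a uniformizer, and $k=R/\pi R$ the residue field. The first thing I would record is that the $j$-invariant is an isomorphism invariant insensitive to base change, so $j(E)$ computed over $K$ equals $j(E)$ computed over any finite extension $K'/K$; this is exactly what makes the statement meaningful, since ``potential good reduction'' is by definition the assertion that $E$ has good reduction over \emph{some} finite $K'/K$. The two directions are of very different difficulty: the forward implication is a one-line computation, while the converse is the heart of the matter.

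For the forward direction, suppose $E$ acquires good reduction over a finite extension $K'/K$ with valuation $v'$. By definition there is a Weierstrass equation for $E$ over the valuation ring of $K'$ whose discriminant is a unit, so $v'(\Delta)=0$, while $c_4$ is integral. Writing $j=c_4^3/\Delta$ gives $v'(j)=3v'(c_4)-v'(\Delta)\ge 0$. Since $v'|_K = e\,v$ for the ramification index $e\ge 1$, this forces $v(j)\ge 0$, i.e.\ $j(E)$ is integral.

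For the converse, assume $v(j)\ge 0$. My plan is first to produce a \emph{single} curve with $j$-invariant $j$ that has good reduction over a finite extension, and then transfer this to $E$ by a twisting argument. Assuming first $j\ne 0,1728$, I take the standard model $E_j: y^2+xy=x^3-\tfrac{36}{j-1728}x-\tfrac{1}{j-1728}$, which satisfies $j(E_j)=j$ and $\Delta=j^2/(j-1728)^3$. Because $j$ is integral, $c_4$ and $\Delta$ are integral, and I would pass to a totally ramified extension $K'/K$ together with a scaling $(x,y)\mapsto(u^2x,u^3y)$, choosing $u\in K'$ with $v'(u)=v'(\Delta)/12$ so that the new discriminant $u^{-12}\Delta$ becomes a unit. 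One then checks the transformed invariants stay integral, using $3v'(c_4')=v'(j)+v'(\Delta')=v'(j)\ge 0$ and $(c_6')^2=(c_4')^3-1728\,\Delta'$, so that $E_j$ has a model over the valuation ring of $K'$ with unit discriminant, i.e.\ good reduction. Finally, since $j(E)=j(E_j)$, the curves $E$ and $E_j$ are isomorphic over $\bar K$ and hence over some finite $K''/K'$; over $K''$, $E$ inherits the good reduction of $E_j$. The special values $j=0$ and $j=1728$ are handled identically starting from the explicit integral models $y^2=x^3-1$ and $y^2=x^3-x$.

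The main obstacle is this converse, and within it two points deserve care. First, the scaling that forces the discriminant to be a unit must be reconciled with integrality of the full Weierstrass equation: when the residue characteristic is $2$ or $3$ the naive model $y^2=x^3-27c_4'x-54c_6'$ may fail to be integral or minimal, and one must instead invoke the general existence of a Weierstrass equation with prescribed $(c_4',c_6')$ and argue through the minimal discriminant, whose valuation is then pinned to $0$. Second, the twisting step relies on the fact that two elliptic curves with equal $j$-invariant differ by a quadratic, quartic, or sextic twist and therefore become isomorphic over a bounded finite extension. A conceptually cleaner alternative, which I would mention, avoids the explicit scaling entirely: pass to an extension over which $E$ is semistable, and use the Tate uniformization to observe that multiplicative reduction forces $v(j)<0$; since we have assumed $v(j)\ge 0$, multiplicative reduction is excluded and good reduction remains.
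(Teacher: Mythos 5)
A preliminary remark: the paper does not prove this statement at all --- it is quoted from Silverman \cite{AEC} (Proposition VII.5.5) purely as motivation for the main theorem --- so your proposal can only be measured against the standard proof in the literature. That proof takes a different route from yours: after a finite extension one puts $E$ in Legendre form $y^2=x(x-1)(x-\lambda)$ (or Deuring form $y^2+\alpha xy+y=x^3$ when the residue characteristic is $2$), and integrality of $j$ forces $\lambda$ and $\lambda-1$ to be units, so the discriminant $16\lambda^2(\lambda-1)^2$ is automatically a unit. Your forward direction is correct and agrees with the standard one-line computation.

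Your converse, however, has a genuine gap, concentrated exactly where you flagged it. Two points. First, a false intermediate claim: for $E_j$ one has $c_4=j/(j-1728)$ and $\Delta=j^2/(j-1728)^3$, so integrality of $j$ does \emph{not} make $c_4$ and $\Delta$ integral --- take $j=1728+\pi$ with residue characteristic $\geq 5$, so that $v(j)=0$ but $v(j-1728)=1$; then $v(c_4)=-1$ and $v(\Delta)=-3$. This particular error happens to be harmless, since your scaling step only uses the relation $3v'(c_4')=v'(j)\geq 0$, but it should be removed. Second, and seriously: at residue characteristic $2$ and $3$, the step ``invoke the general existence of a Weierstrass equation with prescribed $(c_4',c_6')$'' is not a theorem. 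By Kraus's theorem, integral $(c_4,c_6)$ with $c_4^3-c_6^2=1728\Delta$ and $\Delta$ a unit arise from an integral Weierstrass equation only under nontrivial congruence conditions at $p=2,3$, and verifying (or arranging, after a further finite extension) those conditions is precisely the hard content of the theorem at those primes; your argument leaves it unaddressed. This is exactly the difficulty the Legendre/Deuring normal forms are designed to bypass. Your ``conceptually cleaner alternative'' --- pass to an extension where $E$ is semistable and note that multiplicative reduction would force $v(j)<0$ via the Tate curve --- is correct and complete, but as written it is a one-sentence aside resting on two heavy inputs (semistable reduction for elliptic curves and Tate uniformization); if that is to be the proof, it must be promoted to the main argument with those inputs properly invoked.
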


The primary goal of this work was to find a criterion in rational maps which gives potential good reduction as having an integral $j$-invariant does for elliptic curves. The $j$-invariant of an elliptic curve $E$ uniquely determines the isomorphism class to which $E$ belongs. In other words, each equivalence class of elliptic curves corresponds to one point in the moduli space of elliptic curves --- the $j$-line.

Because the moduli space of degree $d$ morphisms of the projective line $M_d$ is an affine integral scheme over $\ZZ$  \cite[Remark 4.51]{ADS}, an algebro-geometric argument shows that a degree $d$ rational map has potential good reduction exactly when it corresponds to an integral point in $M_d$. However, this abstract argument does not allow one to actually find the conjugate map with good reduction.

In the current work, we provide a constructive proof of this result in the case $d=2$. The proof relies on specifics about the moduli space $M_2$. For higher degrees, we lack concrete descriptions of the moduli spaces (that is, we lack natural coordinates), so it's not totally clear what one would mean by an ``integral point'' in these cases.

Questions similar to this have garnered much recent attention. Briend and Hsia have reported results like the ones in this article, using similar methods. At the time of writing, these results have not appeared publicly.

Similarly, Bruin and Molnar \cite{molnar} describe an algorithm for finding a minimal model for rational maps of arbitrary degree $d >1$. Their goal for the algorithm is to find maps with many integral points in an orbit. When a rational map defined over $K$ has a model with good reduction also defined over a local field $K$, their algorithm could be used to find this model. Unlike their algorithm, the present work allows for moving to a finite extension of $K$ when necessary. See Section \ref{Examples} for examples of this distinction.

\section{Preliminaries}
%notation 

In our discourse, we consider rational maps up to conjugation by an element of $\PGL_2$, as this change of coordinates does not affect the geometric properties of the map. To see this conjugation will not change the dynamics of a map $\phi(z)\in K(z)$, observe that for $f\in\PGL_2(K)$,
\[(\phi^f)^n=f^{-1}\circ\phi^n\circ f=(\phi^n)^f\]
The objects we are concerned with are equivalence classes of rational maps which comprise the following quotient space.
 
\begin{definition}
The \emph{moduli space} of rational maps of degree $d$ on $\PP^1$ is the quotient space 
\[M_d = \Rat_d/\PGL_2\]
where $\PGL_2$ acts on $\Rat_d$ via conjugation, $\phi^f = f^{-1}\circ\phi\circ f$.
\end{definition}

This space exists as an affine integral scheme over $\ZZ$, by \cite[Remark 4.51]{ADS}.

We will also use the following notation and definitions from \cite{ADS}:
\begin{center}
\begin{tabular}{lcl}
$K$& \hspace{10mm}& a field with normalized discrete valuation $v: K^* \twoheadrightarrow \ZZ$.\\
$|x|_v$& \hspace{10mm}&$  c^{-v(x)}$ for some $c>1$, an absolute value associated to $v.$\\
$\mathcal{O}_K$& \hspace{10mm}&$ \{\alpha\in K: v(\alpha)\geq 0$\}, the ring of integers of $K$.\\
$\mathfrak{p}$& \hspace{10mm}&$ \{\alpha\in K: v(\alpha)\geq 1$\}, the maximal ideal of $R$.\\
$\mathcal{O}_K^*$& \hspace{10mm}&$ \{\alpha\in K: v(\alpha) = 0$\}, the group of units of $R$.\\
$k$& \hspace{10mm}&$R/{\mathfrak{p}}$, the residue field of $R$.\\
$\sim$& \hspace{10mm}& reduction modulo $\mathfrak{p}$, i.e., $R\to k$, $a\mapsto \tilde{a}$.\\
$\pi$& \hspace{10mm}& uniformizer of  $\mathfrak{p}$.\\
\end{tabular}
\end{center}

\begin{definition}
Let $\phi(z)$ be a rational map over a local field $K$. There are several equivalent conditions by which good reduction is defined, but we will primarily use that $\phi$ has \emph{good reduction}  if $\deg(\phi) = \deg(\tilde\phi)$, where $\tilde{\phi}$ is the reduction of $\phi $ modulo $\mathfrak{p}$. A useful equivalent condition is that $\Res(\phi)$ is a unit modulo $\mathfrak{p}$.  Over a number field $K$, $\phi$ has \emph{good reduction} if it has good reduction at $\mathfrak{p}$ for all prime ideals $\mathfrak{p}$. We say that $\phi$ has \emph{potential good reduction} if there exists an $f\in\PGL_2(\bar{K})$ such that the conjugation $\phi^f$ has good reduction over a finite extension of $K$. \end{definition}

\begin{definition}
To shorten our discourse we define a map $\phi$ as having \emph{genuinely bad reduction} when it does not have potential good reduction. That is, regardless of which $f \in \PGL_2$ we conjugate by, $\phi^f $ has bad reduction. \end{definition}

\begin{definition} The \emph{multiplier} of $\phi$ at a finite fixed point $\alpha$ is the derivative $\lambda_{\alpha}(\phi) = \phi ' (\alpha)$.\end{definition}
% For a more precise definition, see (\ref{multiplier}).
\begin{definition}A periodic point $\alpha$ is called
\begin{center}
\begin{tabular}{rl}
\emph{attracting}& if $|\lambda_{\alpha}(\phi)|<1$,\\
\emph{neutral}& if $|\lambda_{\alpha}(\phi)|=1$,\\
\emph{repelling}& if $|\lambda_{\alpha}(\phi)|>1$.\\
\end{tabular}
\end{center}
\end{definition}

We will make frequent use of the following lemma:
\begin{lemma} \cite[Corollary 5.3]{MortonSilverman}\label{julia} Let $\phi: \PP_K^1 \to \PP_K^1$ be a rational map of degree $d\geq 2$ with good reduction at the prime $\mathfrak{p}$.  Let $P\in \PP^1(\bar{K})$ be a periodic point for $\phi$. Then $P$ is $\mathfrak{p}$-adically non-repelling.\end{lemma}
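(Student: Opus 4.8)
The plan is to reduce the periodic case to the fixed-point case and then show directly that the multiplier of a fixed point of a good-reduction map is integral. First I would replace $\phi$ by its $n$-th iterate $\phi^n$, where $n$ is the period of $P$; this turns $P$ into a fixed point whose multiplier $(\phi^n)'(P) = \prod_{i=0}^{n-1}\phi'(\phi^i(P))$ is exactly the quantity whose absolute value we must bound. The cost is nil because good reduction is preserved under composition: reduction commutes with composition, so $\widetilde{\phi^n} = \tilde\phi^{\,n}$ has degree $d^n = \deg(\phi^n)$, i.e.\ $\phi^n$ again has good reduction. Thus it suffices to prove that every fixed point of a good-reduction map is non-repelling.

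Next I would normalize the fixed point to lie at $0$. Since the multiplier at a fixed point is invariant under conjugation, and since $P \in \PP^1(\bar{K})$ can always be written with jointly primitive integral homogeneous coordinates, there is a M\"obius transformation $f$ represented by a matrix in $\mathrm{GL}_2(\mathcal{O}_{\bar K})$ with unit determinant carrying $P$ to $0$. Conjugation by such an $f$ multiplies $\Res(\phi)$ only by a unit (a power of $\det f$) and keeps the defining forms integral, so $\phi^f$ still has good reduction and the same multiplier at $0$. Now write $\phi^f = F/G$ with $F,G \in \mathcal{O}_{\bar K}[z]$ coprime, jointly primitive, and $\Res(F,G)$ a unit. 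The condition $\phi^f(0)=0$ forces $F(0)=0$, and because $\tilde F, \tilde G$ are coprime (unit resultant), $\tilde G(0)\neq 0$, i.e.\ $G(0)$ is a unit. The multiplier is then $\lambda = F'(0)/G(0)$, with $F'(0)$ integral and $G(0)$ a unit, so $|\lambda|_v \le 1$ and the point is non-repelling.

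The step I expect to require the most care is the normalization in the second paragraph --- specifically, verifying that one may move $P$ to $0$ by a conjugation that preserves good reduction, which involves passing to $\bar K$ (or a suitable finite extension), extending the valuation, and checking that the resultant transforms by a unit. An alternative route that sidesteps the explicit coordinate change is metric: a good-reduction map is non-expanding for the $v$-adic chordal metric, $\rho_v(\phi(P),\phi(Q)) \le \rho_v(P,Q)$ for all $P,Q$, and applying this with $Q \to P$ at a fixed point extracts $|\lambda|_v \le 1$ directly. The subtlety there migrates to relating the chordal Lipschitz constant to the analytic multiplier, so the two approaches merely trade one technical point for another; I would favor the algebraic computation above as the more self-contained one, given the resultant characterization of good reduction already recorded in the excerpt.
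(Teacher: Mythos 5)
The paper does not actually prove this lemma: it is imported verbatim from Morton--Silverman \cite[Corollary 5.3]{MortonSilverman}, so there is no internal proof to compare against. Your argument is correct, and it is essentially the standard proof of that cited result (the same argument appears as Theorem 2.21 in Silverman's \emph{Arithmetic of Dynamical Systems}): pass to $\phi^n$ using the fact that good reduction is preserved under composition, conjugate by an integral M\"obius transformation with unit determinant to place the fixed point at $0$ (which multiplies the resultant by a unit and so preserves good reduction), and then read off $\lambda = F'(0)/G(0)$ with $F'(0)$ integral and $G(0)$ a unit, the latter because $\tilde F(0)=0$ and a common zero of $\tilde F,\tilde G$ would kill the reduced resultant. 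The only point I would tighten is the normalization step: rather than working over $\mathcal{O}_{\bar K}$, whose valuation is not discrete, pass to the finite extension $L = K(P)$, which is again a local field with a unique extension of $v$; the resultant remains a unit in $\mathcal{O}_L$, a primitive coordinate vector for $P$ completes to a matrix in $\mathrm{GL}_2(\mathcal{O}_L)$ with unit determinant, and the rest of your computation goes through unchanged. Your alternative route via non-expansion in the chordal metric is also a legitimate proof strategy (it is the other standard one), but as you note it requires relating the Lipschitz bound to the multiplier, so the algebraic computation you chose is the more self-contained of the two.
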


%Sum over multipliers
\begin{theorem} \label{ms}  \cite[Theorem 1.14]{ADS}
Let $K$ be an algebraically closed field and let $\phi(z)\in K(z)$ be a rational function of degree $d\geq 2$. Assume that 
\begin{center}$\lambda_P\neq 1$ for all $P\in \Fix(\phi)$.\end{center}
Then
\[\sum_{P\in \Fix(\phi)}\frac{1}{1-\lambda_P(\phi)}=1.\]
\end{theorem}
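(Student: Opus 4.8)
The plan is to prove this rational fixed-point formula by realizing each summand $\frac{1}{1-\lambda_P(\phi)}$ as the residue of a single rational $1$-form on $\PP^1$, and then invoking the fact that the sum of the residues of a rational differential on $\PP^1$ over an algebraically closed field is zero. First I would reduce to the case $\infty \notin \Fix(\phi)$: the multiplier at a fixed point is a conjugation invariant, and conjugation by $f \in \PGL_2$ carries $\Fix(\phi)$ bijectively onto $\Fix(\phi^f)$, so the quantity $\sum_P \frac{1}{1-\lambda_P}$ is unchanged under $\phi \mapsto \phi^f$. Since $\phi$ has only finitely many fixed points and $K$ is infinite, I can choose $f \in \PGL_2(K)$ with $f(\infty)$ not fixed by $\phi$, whence $\infty \notin \Fix(\phi^f)$. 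Thus I may assume all fixed points of $\phi$ are finite.

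Next I would introduce $g(z) = z - \phi(z)$ together with the $1$-form $\omega = \frac{dz}{z-\phi(z)}$. The finite fixed points of $\phi$ are precisely the zeros of $g$, and the hypothesis $\lambda_P \neq 1$ guarantees $g'(P) = 1 - \lambda_P(\phi) \neq 0$, so each such $P$ is a \emph{simple} zero of $g$. Consequently $\omega$ has a simple pole at $P$ with residue $\frac{1}{g'(P)} = \frac{1}{1-\lambda_P(\phi)}$. I would also note that $\omega$ has no other finite poles: at a pole of $\phi$ the function $g$ itself has a pole, so $\omega$ is regular there. Hence the finite residues of $\omega$ sum to exactly $\sum_{P \in \Fix(\phi)} \frac{1}{1-\lambda_P(\phi)}$.

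It then remains to compute the residue at $\infty$. Setting $z = 1/w$ and using that $\phi(\infty)$ is finite (since $\infty$ is not fixed), a short Laurent expansion gives $z - \phi(z) = \frac1w\bigl(1 + O(w)\bigr)$, so that $\omega = -\frac{dw}{w} + (\text{regular})$ and $\Res_\infty \omega = -1$. Applying the residue theorem to $\omega$ on $\PP^1$, the total sum of residues vanishes, giving $\sum_{P \in \Fix(\phi)} \frac{1}{1-\lambda_P(\phi)} - 1 = 0$, which is the claimed identity.

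The main obstacle is foundational rather than computational: over an arbitrary algebraically closed field $K$ one cannot appeal to complex contour integration, so the proof must rest on the \emph{algebraic} residue theorem — that for any rational $1$-form on a complete nonsingular curve the sum of its local residues is zero — with the residue defined as the coefficient of $(z-\alpha)^{-1}$ in the local Laurent expansion. Verifying that this residue is well defined and that the multiplier $\lambda_P = \phi'(P)$ enters the computation correctly (including in positive characteristic) is the step that requires the most care.
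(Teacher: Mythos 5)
Your proof is correct, but note that the paper itself offers no proof to compare against: Theorem \ref{ms} is quoted verbatim from \cite[Theorem 1.14]{ADS}. What you have reconstructed is essentially the proof given in that reference, where one normalizes so that $\infty\notin\Fix(\phi)$ and applies the residue theorem to $dz/(z-\phi(z))$; Silverman carries this out over $\CC$ with a contour integral over a large circle, computing that the integral tends to $2\pi i$ while equaling $2\pi i\sum_P \frac{1}{1-\lambda_P}$. Your version replaces the contour integration by the algebraic residue theorem on $\PP^1$ (sum of local residues of a rational $1$-form vanishes), with residues read off from Laurent expansions in a fixed coordinate. This is a genuine improvement in generality: the statement as quoted in the paper is for an arbitrary algebraically closed field $K$, and the complex-analytic argument only covers characteristic $0$ (via the Lefschetz principle), whereas your argument — partial fractions suffice to prove the residue theorem on $\PP^1$ in any characteristic, and your local computations ($\lambda_P\neq 1$ forces $P$ to be a simple zero of $z-\phi(z)$ with residue $\frac{1}{1-\lambda_P}$, and $\Res_\infty = -1$ since $\phi(\infty)$ is finite) use nothing about the characteristic — covers exactly the generality the paper asserts. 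The one point to make explicit, as you acknowledge, is that you may fix the standard coordinate throughout, so no appeal to coordinate-invariance of residues in characteristic $p$ is needed.
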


When $d=2$, we can derive from Theorem \ref{ms} that the symmetric functions $\sigma_1, \sigma_3$ on the multipliers fulfill the following relation:

\begin{equation}\label{3sigmas}\sigma_1 = \sigma_3+2.\end{equation}
Though Theorem \ref{ms} does not hold when one or more of the multipliers is 1, (1) still does. To see this, suppose one of the multipliers equals 1. Then one of the fixed points has multiplicity greater than one, so at least two multipliers are 1. It is easy to see that (1) holds when two or three of the multipliers are 1.

The proof of our main result uses the normal forms from the following lemma:
%(H) Normal forms lemma 
\begin{lemma} \label{NFL} \cite[Lemma 4.59]{ADS} Let $\phi \in \Rat_2(\CC)$ be a rational map of degree $2$ and let $\lambda_1, \lambda_2, \lambda_3$ be the multipliers of its fixed points. \\
$(a)$ If $\lambda_1 \lambda_2\neq 1$, then there is an $f\in \PGL_2(\CC)$ such that
\[\phi^f(z)=\frac{z^2+\lambda_1 z}{\lambda_2 z + 1}.\]
Further, $\Res(z^2+\lambda_1 z, \lambda_2 z + 1) = 1-\lambda_1\lambda_2$.\\
$(b)$ If $\lambda_1 \lambda_2 = 1$, then $\lambda_1=\lambda_2=1$ and there is an $f\in \PGL_2(\CC)$ such that 
\[\phi^f(z) = z + \sqrt{1-\lambda_3} + \frac{1}{z}.\]
\end{lemma}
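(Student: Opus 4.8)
The plan is to realize each normal form by spending the three degrees of freedom in $\PGL_2(\CC)$ to send chosen fixed points to standard positions, and then to exhaust the remaining freedom to normalize coefficients, reading off the multipliers to confirm the identification. Throughout I would use that a degree $2$ map has exactly three fixed points counted with multiplicity (the roots of $P(z)-zQ(z)$ together with $\infty$ when it is fixed), together with the standard fact that a fixed point of multiplicity $\geq 2$ has multiplier $1$; the latter follows by differentiating $P(z)-zQ(z)$ at the fixed point and substituting into $\lambda_\alpha = (P'(\alpha)-\alpha Q'(\alpha))/Q(\alpha)$.

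For part $(a)$, I would first argue that the fixed points $P_1, P_2$ carrying the multipliers $\lambda_1, \lambda_2$ are distinct: if they coincided, that point would be a multiple fixed point, forcing $\lambda_1=\lambda_2=1$ and hence $\lambda_1\lambda_2=1$, contrary to hypothesis. I would then pick $f\in\PGL_2(\CC)$ sending $P_1\mapsto 0$ and $P_2\mapsto\infty$, so that $\phi^f$ fixes both $0$ and $\infty$. A degree $2$ map fixing $0$ and $\infty$ has the shape $\frac{az^2+bz}{ez+g}$ with $a,g\neq 0$ (vanishing of the numerator's constant term encodes fixing $0$, and the denominator having degree $\leq 1$ encodes fixing $\infty$); its multiplier at $0$ is $b/g$ and its multiplier at $\infty$ is $e/a$, so these equal $\lambda_1$ and $\lambda_2$. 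Finally I would use the one-parameter family of conjugations $z\mapsto\mu z$ fixing $\{0,\infty\}$, together with overall rescaling of the fraction, to normalize $a=g=1$, landing the map on $\frac{z^2+\lambda_1 z}{\lambda_2 z+1}$. The resultant claim is then the direct Sylvester computation $\Res(z^2+\lambda_1 z,\,\lambda_2 z+1)=1-\lambda_1\lambda_2$.

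For part $(b)$, I would first invoke relation $(\ref{3sigmas})$: writing it as $\lambda_1+\lambda_2+\lambda_3=\lambda_1\lambda_2\lambda_3+2$ and substituting $\lambda_1\lambda_2=1$ gives $\lambda_1+\lambda_2=2$, so $\lambda_1,\lambda_2$ are the two roots of $t^2-2t+1$ and hence $\lambda_1=\lambda_2=1$. Thus $\phi$ has a fixed point of multiplicity $\geq 2$, which I would conjugate to $\infty$. Requiring that $\infty$ be such a multiplier-$1$ multiple fixed point forces the leading coefficients of numerator and denominator to agree, and after rescaling the map becomes $\frac{z^2+bz+c}{z+g}$. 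I would then use the residual affine stabilizer of $\infty$: a translation $z\mapsto z+t$ to clear the denominator's constant term (reaching the form $z+B+C/z$), followed by a scaling $z\mapsto\mu z$ with $\mu^2=C$ to normalize the coefficient of $1/z$ to $1$. A short computation shows the multiplier at the unique finite fixed point of $z+\beta+1/z$ is $1-\beta^2$; setting this equal to $\lambda_3$ identifies $\beta=\sqrt{1-\lambda_3}$, the sign being absorbed into the branch of $\mu=\sqrt{C}$.

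The main obstacle I anticipate is the careful bookkeeping of multipliers at $\infty$ and the verification that no normalizing denominator vanishes, i.e. that $a,g,C$ are genuinely nonzero, since each such failure would secretly drop the degree below $2$. The one input beyond linear algebra is relation $(\ref{3sigmas})$ in part $(b)$: without it there is no reason $\lambda_1\lambda_2=1$ should force $\lambda_1=\lambda_2=1$, and it is exactly this collapse that makes the single-parameter family $z+\sqrt{1-\lambda_3}+1/z$, rather than a two-parameter family, the correct target.
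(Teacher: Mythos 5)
The paper does not prove this lemma at all; it quotes it directly from \cite[Lemma 4.59]{ADS}, and the proof in that cited source is essentially the argument you give: in (a), observe the two fixed points carrying $\lambda_1,\lambda_2$ must be distinct, conjugate them to $0$ and $\infty$, read off the multipliers $b/g$ and $e/a$, and use the residual scaling to normalize; in (b), use relation (\ref{3sigmas}) to force $\lambda_1=\lambda_2=1$, conjugate the multiple fixed point to $\infty$, then translate and scale to reach $z+\sqrt{1-\lambda_3}+1/z$. Your proposal is correct as written --- the multiplier computations, the Sylvester/resultant evaluation $1-\lambda_1\lambda_2$, and the nonvanishing checks ($a,g,C\neq 0$) all hold up, and the one degenerate case you leave implicit, $\beta=0$, occurs exactly when $\lambda_3=1$ and is consistent with the stated normal form --- so this is essentially the same approach as the standard proof the paper relies on by citation.
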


\section{Local Results}

In this section, we prove the main result, Theorem \ref{MR}, which characterizes degree two rational maps with potential good reduction over local fields $K$. We address the case $d=2$, as it is known that $M_2 \cong \mathbb{A}^2$ with natural coordinates $(\sigma_1, \sigma_2)$, the first and second symmetric functions on the multipliers of the fixed points of $\phi$\cite{M2}.

Note that $(\sigma_1,\sigma_2)\in M_2$ is integral if and only if all three multipliers, $\lambda_1, \lambda_2, \lambda_3$ are algebraic integers. One direction is clear. For the other direction, suppose that $\sigma_1$ and $\sigma_2$ are integral. Note that $\sigma_3 = \sigma_1-2$ Since $\sigma_1, \sigma_2, \sigma_3$ are the symmetric functions on $\lambda_1, \lambda_2, \lambda_3$, the latter are the roots of the monic polynomial $f(z)=z^3-\sigma_1z^2+\sigma_2z-\sigma_3$. We can conclude that the multipliers are algebraic integers. Since the definition of potential good reduction allows us to go to an extension field, we can then assert that an integral point in $M_2$ is equivalent to integral multipliers.

For the proof, we assume that all three multipliers of a rational map are in $\mathcal{O}_K$, so we begin with a lemma detailing when this does not happen. The main theorem itself is proved via two propositions, one for each normal form in Lemma \ref{NFL}.

%(I) Lemma - when \lambda_3 is integral
\begin{lemma}
\label{L3integral}
If $\lambda_1\lambda_2\neq 1$ and $\lambda_1\lambda_2\equiv 1 \pmod{\pi}$, then $\lambda_3\notin\ \mathcal{O}_K$ if and only if the following conditions all hold:
\begin{align*}
e_1 = e_2 & = e\\
a_1+a_2 &= a\pi^e,\hspace{3mm} a\in\mathcal{O}_K^*\\
a+a_1a_2 & \equiv 0 \pmod{\pi}.
\end{align*}
\end{lemma}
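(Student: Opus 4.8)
The plan is to solve the fixed-point relation \eqref{3sigmas}, namely $\sigma_1 = \sigma_3 + 2$, for $\lambda_3$ and then simply read off the valuation of the result. Writing the relation as $\lambda_1 + \lambda_2 + \lambda_3 = \lambda_1\lambda_2\lambda_3 + 2$ and using $\lambda_1\lambda_2 \neq 1$, I isolate
\[
\lambda_3 = \frac{2 - \lambda_1 - \lambda_2}{1 - \lambda_1\lambda_2}.
\]
Thus $\lambda_3 \notin \mathcal{O}_K$ is equivalent to the strict inequality $v(2 - \lambda_1 - \lambda_2) < v(1 - \lambda_1\lambda_2)$, and the entire lemma reduces to a comparison of these two valuations.

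Next I introduce the coordinates $a_i, e_i$. Since $\lambda_1\lambda_2 \equiv 1 \pmod{\pi}$ and both multipliers lie in $\mathcal{O}_K$, each $\lambda_i$ is a unit, and I write $\lambda_i = 1 + a_i\pi^{e_i}$ with $a_i \in \mathcal{O}_K^*$ and $e_i = v(\lambda_i - 1)$. Expanding produces the two key expressions
\[
2 - \lambda_1 - \lambda_2 = -\left(a_1\pi^{e_1} + a_2\pi^{e_2}\right), \qquad 1 - \lambda_1\lambda_2 = -\left(a_1\pi^{e_1} + a_2\pi^{e_2} + a_1a_2\pi^{e_1+e_2}\right).
\]
Abbreviating the numerator by $N$ and the denominator by $D$, the crucial observation is that $D = N - a_1a_2\pi^{e_1+e_2}$, where the extra term has valuation \emph{exactly} $e_1 + e_2$ because $a_1a_2$ is a unit. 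Hence $\lambda_3 \notin \mathcal{O}_K$ occurs precisely when adjoining this term to $N$ strictly raises the valuation, which can only happen through cancellation of leading terms.

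The heart of the argument is then a short case analysis on $v(N) = v(a_1\pi^{e_1} + a_2\pi^{e_2})$. If $e_1 \neq e_2$, the two summands of $N$ have distinct valuations, so $v(N) = \min(e_1,e_2) < e_1 + e_2$; the extra term cannot cancel against $N$, forcing $v(D) = v(N)$ and making $\lambda_3$ a unit. Therefore $\lambda_3 \notin \mathcal{O}_K$ requires $e_1 = e_2 =: e$, the first condition. Under $e_1 = e_2 = e$ one has $N = -(a_1 + a_2)\pi^e$, and cancellation against $a_1a_2\pi^{2e}$ demands $v(N) = 2e$, i.e. $v(a_1 + a_2) = e$; this is exactly $a_1 + a_2 = a\pi^e$ with $a \in \mathcal{O}_K^*$, the second condition. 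Substituting gives $N = -a\pi^{2e}$ and $D = -(a + a_1a_2)\pi^{2e}$, whence $v(\lambda_3) = -v(a + a_1a_2)$, which is negative exactly when $a + a_1a_2 \equiv 0 \pmod{\pi}$, the third condition. Reading this chain of equivalences in both directions yields the lemma.

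I expect the only delicate point to be the bookkeeping in the denominator. One must verify that lifting $v(D)$ strictly above $v(N)$ requires both that the leading terms align ($v(N) = e_1 + e_2$) and that they genuinely cancel modulo the next power of $\pi$, while in every other configuration $v(D) \leq v(N)$ and hence $\lambda_3 \in \mathcal{O}_K$. Confirming that the three displayed conditions capture exactly the cancellation case, and checking that they are consistent with the standing hypothesis $\lambda_1\lambda_2 \equiv 1 \pmod{\pi}$ (indeed they force $v(D) \geq 2e+1$), is the part needing the most care, but it is entirely mechanical once the expressions for $N$ and $D$ are in hand.
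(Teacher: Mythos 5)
Your proposal is correct and takes essentially the same route as the paper's own proof: both solve the fixed-point relation for $\lambda_3 = \frac{2-\lambda_1-\lambda_2}{1-\lambda_1\lambda_2}$, substitute $\lambda_i = 1 + a_i\pi^{e_i}$, and extract the three conditions from a valuation comparison of numerator and denominator, splitting on whether $e_1 = e_2$. Your bookkeeping via $D = N - a_1a_2\pi^{e_1+e_2}$ is a symmetric rephrasing of the paper's step of normalizing by $\pi^{\min(e_1,e_2)}$, and if anything it makes the if-and-only-if structure of the cancellation analysis slightly more explicit.
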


\begin{proof} Suppose $\lambda_1\lambda_2\neq 1$ and $\lambda_1\lambda_2\equiv 1 \pmod{\pi}$. By (\ref{ms}), we can represent $\lambda_3$ in terms of the other two multipliers:

\[\lambda_3 = \frac{2-\lambda_1-\lambda_2}{1-\lambda_1\lambda_2}
= \frac{ a_1\pi^{e_1}+ a_2\pi^{e_2}}{a_1\pi^{e_1}+ a_2\pi^{e_2}+a_1a_2\pi^{e_1+e_2}}.\]

Without loss of generality, suppose that $e_1\leq e_2$, and simplify to obtain
\[\lambda_3=\frac{a_1 + a_2\pi^{e_2-e_1}}{a_1 + a_2\pi^{e_2-e_1}+a_1a_2\pi^{e_2}}.\]

The condition $\lambda_3\notin\mathcal{O}_K$ occurs precisely when a higher power of $\pi$ divides the denominator than the numerator, which happens only when $a_1 + a_2\pi^{e_2-e_1}$ and $a_1a_2\pi^{e_2}$ have the same $\pi$-adic valuation. Since $\pi\nmid a_1$ and $\pi\nmid a_2$, we can simplify the statement to
\[|a_1 + a_2\pi^{e_2-e_1}|_{\pi} = |\pi^{e_2}|_{\pi}.\]

However, since $\pi \nmid a_1$, we can conclude that $e_1=e_2$ and represent $a_1+a_2 = a\pi^e$ with $a \neq 0$ and $\pi\nmid a$. It follows that $e_2=e$. Rewriting with our new information, we get
\[\lambda_3=\frac{ a\pi^e}{\pi^e(a+a_1a_2)}.\]

For the denominator to be divisible by a higher $\pi$ power than the numerator gives us the final condition, that $a+a_1a_2\equiv 0 \pmod{\pi}$.\end{proof}
% Main theorem
\begin{theorem}\label{MR}
A degree $2$ rational map $\phi(z)$ over a local field $K$ has potential good reduction if and only if $[\phi]\in M_2(\mathcal{O_K})$.
\end{theorem}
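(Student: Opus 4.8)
The plan is to prove both directions of the equivalence. The forward direction, that potential good reduction implies integrality of the moduli point, should follow quickly from the dynamical constraints already assembled. The reverse direction, that an integral point admits a conjugate with good reduction, is where the real work lies, and I would prove it by means of the two propositions announced after Lemma \ref{L3integral}, one for each normal form in Lemma \ref{NFL}.

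Let me think about the forward direction first. Suppose $\phi$ has potential good reduction. After passing to a finite extension and conjugating, we may assume $\phi$ itself has good reduction over some local field. I would invoke Lemma \ref{julia}: good reduction forces every periodic point, in particular every fixed point, to be non-repelling, so each multiplier satisfies $|\lambda_i|_v \leq 1$, i.e.\ $\lambda_i \in \mathcal{O}_K$ (after extension). Since the multipliers are invariant under conjugation and unaffected by field extension, the original map already has integral multipliers, hence integral symmetric functions $\sigma_1, \sigma_2$. By the remark preceding the theorem that an integral point in $M_2$ is equivalent to all three multipliers being algebraic integers, we conclude $[\phi] \in M_2(\mathcal{O}_K)$.

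For the reverse direction, assume $[\phi] \in M_2(\mathcal{O}_K)$, so by the same equivalence all three multipliers $\lambda_1, \lambda_2, \lambda_3$ are in $\mathcal{O}_K$ (after a finite extension). I would split into the two cases of Lemma \ref{NFL}. In case $(a)$, where $\lambda_1\lambda_2 \neq 1$, the normal form is $\phi^f(z) = (z^2 + \lambda_1 z)/(\lambda_2 z + 1)$ with resultant $1 - \lambda_1\lambda_2$; if $\lambda_1\lambda_2 \not\equiv 1 \pmod{\pi}$ the resultant is already a unit and we have good reduction outright. The delicate situation is $\lambda_1\lambda_2 \equiv 1 \pmod{\pi}$, and this is exactly why Lemma \ref{L3integral} was proved: its contrapositive guarantees that, since $\lambda_3 \in \mathcal{O}_K$ by hypothesis, at least one of the three listed conditions fails, which should give enough room to conjugate by a well-chosen element of $\PGL_2$ (scaling by a suitable power of $\pi$) so as to clear the common factor from the resultant and restore good reduction. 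Case $(b)$, where $\lambda_1 = \lambda_2 = 1$, uses the form $\phi^f(z) = z + \sqrt{1-\lambda_3} + 1/z$; here the relevant quantity is $\sqrt{1-\lambda_3}$, and with $\lambda_3 \in \mathcal{O}_K$ I would conjugate by a scaling to normalize this term and check the resultant becomes a unit, possibly after a further extension to accommodate the square root.

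I expect the main obstacle to be the explicit conjugation in case $(a)$ under the congruence $\lambda_1\lambda_2 \equiv 1 \pmod{\pi}$. The resultant $1 - \lambda_1\lambda_2$ is then a nonzero nonunit, so good reduction fails for the naive normal form, and one must produce an explicit $f \in \PGL_2(\bar K)$ — presumably of the form $z \mapsto u z$ or $z \mapsto \pi^m z$ — that rescales the coefficients to absorb the excess valuation in the resultant. The precise choice of scaling exponent will be dictated by the valuations $e_1, e_2$ and the unit parts $a_1, a_2$ tracked in Lemma \ref{L3integral}, and verifying that such a scaling simultaneously keeps all coefficients integral and makes the reduced map have degree $2$ is the crux of the argument. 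The remark that we are permitted to pass to a finite extension of $K$ is what makes this maneuver possible when the natural scaling factor is not already in $K$.
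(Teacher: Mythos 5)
Your outline reproduces the paper's architecture exactly: the forward direction via Lemma \ref{julia} (multipliers are conjugation-invariant and must be non-repelling under good reduction, hence integral), and the reverse direction split along the two normal forms of Lemma \ref{NFL}, invoking Lemma \ref{L3integral} in contrapositive form when $\lambda_1\lambda_2\equiv 1 \pmod{\pi}$. The forward direction, the case $\lambda_1\lambda_2\not\equiv 1\pmod{\pi}$, and case (b) are complete as you state them (in case (b) no conjugation is needed at all: once $\lambda_3\in\mathcal{O}_K$, the coefficients are integral and the resultant is already $1$). But the step you defer --- the explicit conjugation when $\lambda_1\lambda_2\equiv 1\pmod{\pi}$ --- is the entire content of the hard direction, and the conjugating element you guess at would not work. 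A homothety $z\mapsto uz$ or $z\mapsto \pi^m z$ alone can never repair the normal form: conjugating $\phi(z)=(z^2+\lambda_1z)/(\lambda_2z+1)$ by $z\mapsto cz$ and renormalizing the coefficients yields resultant $c\,(1-\lambda_1\lambda_2)$ when $v(c)\geq 0$ and $\pi^{2|v(c)|}(1-\lambda_1\lambda_2)$ when $v(c)<0$, so the valuation of the resultant never decreases. The geometric reason is that when $\lambda_1\equiv\lambda_2\equiv 1\pmod{\pi}$ the reduction is $(z^2+z)/(z+1)$, which degenerates because of the common factor at $z\equiv -1$; a homothety fixes $0$ and $\infty$ and cannot move this bad point.

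The missing idea in the paper's proof is to conjugate \emph{first} by the translation $f(z)=z-1$, which moves the point of degeneration to the origin and makes every non-leading coefficient ($\lambda_1+\lambda_2-2$, $2-\lambda_1-\lambda_2$, $1-\lambda_2$) divisible by $\pi$, and \emph{only then} by $g(z)=cz$. Moreover the choice of $c$ is dictated case by case according to which condition of Lemma \ref{L3integral} fails: $c=\sqrt{\pi^{e_1}}$ when $e_1<e_2$; $c=\sqrt{\pi^{e+d}}$ when $e_1=e_2=e$ and $a_1+a_2=a\pi^d$ with $d<e$ and $\pi\nmid a$; and $c=\pi^e$ when $e_1=e_2=e$, $a_1+a_2=a\pi^e$, but $a+a_1a_2\not\equiv 0\pmod{\pi}$. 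In each case one must verify by direct computation that $\Res(\phi)=c^2\Res((\phi^f)^g)$ and that the new resultant is a unit, which uses precisely $\pi\nmid a_1$, $\pi\nmid a$, and $a+a_1a_2\not\equiv 0\pmod{\pi}$ respectively; the finite extension $K(c)$ enters exactly here. Without the translation step and these three verifications, the claim that there is ``enough room to conjugate'' is an expectation, not a proof.
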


Suppose for $[\phi]\notin M_2(\mathcal{O}_K)$. We remind the reader that an integral point in the moduli space is equivalent to integral multipliers if we allow for a field extension. If $[\phi]$ does not correspond to an integral point in the moduli space, it must have a non-integral multiplier, $\lambda$. Equivalently, $|\lambda|_v>1$ so $\phi(z)$ has a repelling fixed point. By Lemma \ref{julia}, $\phi(z)$ has genuinely bad reduction. 

The other direction will be proved using two propositions --- one for each of the two forms in (\ref{NFL}). 
\begin{proposition}\label{form1}
Suppose that the multipliers $\lambda_1, \lambda_2, \lambda_3$ of $\phi(z)$ are all integral.
Let  
\[\phi(z)= z+\sqrt{1-\lambda_3}+\frac{1}{z}\]
with $\lambda_1\lambda_2=1$. Then $\phi(z)$ has good reduction. \end{proposition}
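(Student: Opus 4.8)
The plan is to put $\phi$ in homogeneous coordinates and compute its resultant directly. Writing
\[
\phi(z)= z+\sqrt{1-\lambda_3}+\frac{1}{z}=\frac{z^2+\sqrt{1-\lambda_3}\,z+1}{z},
\]
I would homogenize to the pair of binary quadratic forms $F(X,Y)=X^2+\sqrt{1-\lambda_3}\,XY+Y^2$ and $G(X,Y)=XY$, so that $\phi=[F:G]$. The virtue of this representation is that its only nonconstant coefficient is $\sqrt{1-\lambda_3}$; every other coefficient is already $0$ or $1$.

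Next I would check that this is an integral, normalized model and compute its resultant. Since $\lambda_3\in\mathcal{O}_K$ by hypothesis, $1-\lambda_3\in\mathcal{O}_K$, so $\sqrt{1-\lambda_3}$ is an algebraic integer; passing to the finite extension $L=K(\sqrt{1-\lambda_3})$ with valuation ring $\mathcal{O}_L$, all coefficients of $F$ and $G$ lie in $\mathcal{O}_L$, and the coefficient of $X^2$ in $F$ equals $1$, a unit, so the model is normalized. The Sylvester determinant of $F$ and $G$ is a $4\times4$ determinant that evaluates to $1$ independently of the value of $\sqrt{1-\lambda_3}$; hence $\Res(F,G)=1$. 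Since the resultant is a unit, $\phi$ has good reduction over $L$, which gives the potential good reduction claimed for $\phi$ over $K$.

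I expect no serious obstacle: the substance is the (routine) resultant computation together with the observation that integrality of $\lambda_3$ forces integrality of the coefficient $\sqrt{1-\lambda_3}$. The one point worth verifying carefully — the closest thing to a subtlety — is the degenerate case $1-\lambda_3\equiv 0\pmod{\mathfrak{p}}$, where $\sqrt{1-\lambda_3}$ reduces to $0$. There I would confirm directly that the reduced map $\tilde\phi=[X^2+Y^2:XY]$ still has degree $2$: neither $X$ nor $Y$ divides $X^2+Y^2$ modulo $\mathfrak{p}$, so $\tilde F$ and $\tilde G$ share no common factor and $\deg\tilde\phi=2=\deg\phi$. This is consistent with, and reconfirms, the resultant being a unit.
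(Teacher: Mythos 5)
Your proposal is correct and takes essentially the same route as the paper: note that integrality of $\lambda_3$ forces $\sqrt{1-\lambda_3}$ to be integral, write $\phi=[X^2+\sqrt{1-\lambda_3}\,XY+Y^2:XY]$, and observe that the resultant equals $1$, a unit, so the reduction has degree $2$. Your added care (explicitly passing to $L=K(\sqrt{1-\lambda_3})$ and double-checking the case $1-\lambda_3\equiv 0 \pmod{\mathfrak{p}}$) is harmless but redundant, since the unit resultant already settles everything.
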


\begin{proof}
Given $\lambda_3\in \mathcal{O}_K$, it follows that $\sqrt{1-\lambda_3}$ is also integral. Since the coefficients of $\phi(z)$ are all integral, we can calculate the resultant 
\[\Res(\phi^f(z)) = \Res(z^2+\sqrt{1-\lambda_3}z+1, z) = 1.\]
We can therefore conclude that in this case $\phi(z)$ has  good reduction. 
\end{proof}

\begin{proposition}\label{form2}
Suppose that the multipliers $\lambda_1, \lambda_2, \lambda_3$ of $\phi(z)$ are all integral.
Let \[\phi(z)=\frac{z^2+\lambda_1 z}{\lambda_2 z + 1}.\] If $\lambda_1\lambda_2\neq 1$, then $\phi(z)$ has potential good reduction.
\end{proposition}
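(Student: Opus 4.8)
The plan is to work with the given model directly and split on the valuation of its resultant. First I would record the fixed-point data: solving $\phi(z)=z$ shows that $\phi$ has fixed points at $0$, $\infty$, and $\alpha=\frac{1-\lambda_1}{1-\lambda_2}$, with multipliers $\lambda_1,\lambda_2,\lambda_3$ respectively, and by Lemma \ref{NFL}$(a)$ the resultant of this model is $\Res(\phi)=1-\lambda_1\lambda_2$. If $\lambda_1\lambda_2\not\equiv 1\pmod{\pi}$, then $1-\lambda_1\lambda_2\in\mathcal{O}_K^*$, so this model already has unit resultant and $\phi$ has good reduction with no conjugation needed. The entire content is therefore the case $\lambda_1\lambda_2\equiv 1\pmod{\pi}$ (with $\lambda_1\lambda_2\neq 1$), where the model has bad reduction and we must produce a conjugating $f$.

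In that case I would first feed the hypothesis $\lambda_3\in\mathcal{O}_K$ into Lemma \ref{L3integral}. Writing $\lambda_i-1=a_i\pi^{e_i}$ with $a_i$ units, the condition $\lambda_1\lambda_2\equiv1\pmod\pi$ forces either $e_1,e_2\ge 1$ or $e_1=e_2=0$; and a short computation shows the second possibility forces all three conditions of Lemma \ref{L3integral}, whence $\lambda_3\notin\mathcal{O}_K$. Thus integrality of $\lambda_3$ places us in the regime $\lambda_1\equiv\lambda_2\equiv 1\pmod{\pi}$, where $\tilde\phi$ is the identity. The source of the degeneracy is then visible: the numerator root $-\lambda_1$ and the denominator root $-1/\lambda_2$ collide modulo $\pi$ (their difference is $\frac{1-\lambda_1\lambda_2}{\lambda_2}$, of valuation $v(1-\lambda_1\lambda_2)$), so numerator and denominator acquire a common factor in the reduction.

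The construction I would use is to blow up at this collision. Conjugating by $f(w)=\beta w-\lambda_1$ recenters at the numerator root and rescales by $\beta$; choosing $v(\beta)=\frac12 v(1-\lambda_1\lambda_2)$ — which requires passing to a ramified extension of $K$ exactly when $v(1-\lambda_1\lambda_2)$ is odd — one finds that the numerator and denominator of $\phi^f$ each have content exactly $\beta$. Dividing it out leaves an integral model whose reduction is a genuine quadratic map of the shape $w+c_0/w$ (all multipliers being $\equiv 1$, the reduction is parabolic), so $\phi$ has good reduction. As a sanity check I carried out the representative case $\phi=\frac{z^2+6z}{6z+1}$ over $\QQ_5$, conjugated by $f(w)=\sqrt5\,w-6$, obtaining $\phi^f=\frac{w^2+6\sqrt5\,w-42}{6w-7\sqrt5}$ with $\Res=-7\in\mathcal{O}_K^*$, which confirms the mechanism.

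The main obstacle is verifying that this recipe produces unit resultant in every subcase, and it is here that $\lambda_3\in\mathcal{O}_K$ must be used decisively. The failing condition among the three in Lemma \ref{L3integral} ($e_1\neq e_2$, or $e_1=e_2$ with $v(a_1+a_2)$ in various ranges) controls $v(1-\lambda_1\lambda_2)$ and the valuations of the coefficients of $\phi^f$, and these are precisely what force the extracted content to equal $\beta$ and the reduced map to remain of degree $2$. Were $\lambda_3$ non-integral, $\phi$ would have a repelling fixed point and genuinely bad reduction by Lemma \ref{julia}, so the computation must break down; tracking exactly how the integrality of $\lambda_3$ enters each subcase is the delicate part, together with checking that the chosen center and scale yield coprime reductions rather than a new common factor.
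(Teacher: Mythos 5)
Your plan coincides with the paper's own proof in every structural respect: dispose of $\lambda_1\lambda_2\not\equiv 1\pmod{\pi}$ via $\Res(\phi)=1-\lambda_1\lambda_2$, reduce to $\lambda_1\equiv\lambda_2\equiv 1\pmod{\pi}$ (your route through Lemma \ref{L3integral} with $e_1=e_2=0$ is a legitimate substitute for the paper's use of the relation $\sigma_1=\sigma_3+2$), then conjugate by a recentering-plus-scaling map whose scale $\beta$ satisfies $v(\beta)=\frac{1}{2}v(1-\lambda_1\lambda_2)$, passing to a ramified extension exactly when that valuation is odd. This is literally the paper's conjugation $z\mapsto cz-1$; your center $-\lambda_1$ differs from $-1$ by $\lambda_1-1$, whose quotient by $\beta$ is still integral, so the two models are conjugate by an element of $\PGL_2(\mathcal{O}_L)$ and are interchangeable.

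The gap is that you stop exactly where the proof has to start. The sentence ``one finds that the numerator and denominator of $\phi^f$ each have content exactly $\beta$ \dots\ leaves an integral model whose reduction is a genuine quadratic map'' is the proposition restated, not an argument, and your final paragraph concedes that the verification in each subcase is missing. That verification is the entire content of the paper's proof: it splits into three cases according to which condition of Lemma \ref{L3integral} fails --- the only place where $\lambda_3\in\mathcal{O}_K$ enters --- because the failing condition is what pins down $v(1-\lambda_1\lambda_2)$ (namely $\min(e_1,e_2)$; or $e+d$ where $a_1+a_2=a\pi^d$ with $d<e$; or $2e$) and hence the correct $\beta$, and in each case one must compute the conjugated model and check its resultant is a unit. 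That check is not formal: in the paper's third case it reduces, after a chain of substitutions, to $\Res((\phi^f)^g)\equiv a_1a_2+a\pmod{\pi}$, which is a unit precisely because the third condition of Lemma \ref{L3integral} fails. Your single worked example lands in one case (the paper's Case 2) and cannot stand in for the trichotomy. Two smaller slips would also need repair: the content extracted is $\beta^2$, not $\beta$ --- the raw conjugate is $\bigl(\beta^2z^2+(\lambda_1+\lambda_2-2)\beta z+2-\lambda_1-\lambda_2\bigr)/\bigl(\beta^2\lambda_2 z+\beta(1-\lambda_2)\bigr)$, and in your own example you divide by $5=\beta^2$ --- and the reduced map has the shape $w+c_0/w$ only in the first two cases, since when $v(1-\lambda_1\lambda_2)=2e$ the reduced denominator is $z-\tilde{a}_2$ with $\tilde{a}_2\neq 0$.
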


\begin{proof}
Recall that by Lemma \ref{NFL},

\[\Res(z^2+\lambda_1 z,\lambda_2 z + 1,z)=1-\lambda_1\lambda_2.\]

If $\lambda_1\lambda_2\not\equiv 1 \pmod{\pi}$, then $\Res(\phi(z))\neq 0 \pmod{\pi}$, so $\phi(z)$ has good reduction. Now suppose $\lambda_1\lambda_2\equiv 1 \pmod{\pi}$. Here, we can show by equation (\ref{3sigmas}) that  $\lambda_1\equiv 1 \pmod{\pi}$ and $ \lambda_2\equiv 1 \pmod{\pi}$:
\begin{align*}
\sigma_1 &= \sigma_3 + 2\\
\lambda_1 + \lambda_2 + \lambda_3 &= \lambda_1 \lambda_2 \lambda_3 + 2\\
\lambda_1 + \lambda_2 &\equiv   2 \pmod{\pi}.\\
\end{align*}
Now, we may substitute and use our assumption here that $\lambda_1 \lambda_2\equiv 1 \pmod{\pi}$ to get the following:
\begin{align*}
\lambda_1(2-\lambda_1)&\equiv   1 \pmod{\pi}\\
(\lambda_1-1)^2 &\equiv   0 \pmod{\pi}\\
\lambda_1 &\equiv   1 \pmod{\pi}.
\end{align*}
It follows that $\lambda_2\equiv 1 \pmod{\pi}$ must hold too. We can represent $\lambda_1$ and $\lambda_2$ as follows, with $a_1,a_2\in\mathcal{O}_K$, $e_1,e_2 >0$, $\pi\nmid a_1$, and $\pi\nmid a_2$:

\begin{align*}
\lambda_1&= 1 + a_1\pi^{e_1}\\
\lambda_2&= 1 + a_2\pi^{e_2}.
\end{align*}

For $\lambda_3\in\mathcal{O}_K$ to hold, at least one of the conditions in Lemma \ref{L3integral}must fail.

%Case1
\textbf{Case 1}\\
Suppose $e_1\neq e_2$. Then, without loss of generality, we may assume $e_1<e_2$. 
\begin{align*}
\Res(\phi) &= 1-\lambda_1\lambda_2\\
&= -(a_1\pi^{e_1}+a_2\pi^{e_2}+a_1a_2\pi^{e_1+e_2})\\
&= -\pi^{e_1}(a_1+a_2\pi^{e_2-e_1}+a_1a_2\pi^{e_2}).
\end{align*}

By assumption $\pi\nmid a_1$, so the order of $\pi$ in $\Res(\phi)$ is $e_1$. To see that $\phi(z)$ has potential good reduction, first conjugate by $f(z)=z-1$ to obtain

\[\phi^f(z) = \frac{z^2 + (\lambda_1+\lambda_2-2)z +2 - \lambda_1 - \lambda_2}{\lambda_2z-\lambda_2+1}.\]

Now conjugate again by $g(z) =cz$ (with $c=\sqrt{ \pi^{e_1}}$) to get

\begin{equation}(\phi^f)^g(z)= \frac{c^2z^2+(\lambda_1+\lambda_2-2)cz + 2-\lambda_1-\lambda_2}{c^2\lambda_2z+c(1-\lambda_2)}.\end{equation}

Since $\lambda_1+\lambda_2-2 = a_1\pi^{e_1}+a_2\pi^{e_2} =\pi^{e_1}(a_1+a_2\pi^{e_2-e_1})$, with $\pi \nmid a_1$, we can write $c^2m=\lambda_1+\lambda_2-2$ with $\pi\nmid m$. Similarly, we can let $c^2n = 1-\lambda_2$. With those substitutions, $(3)$ may be rewritten as

\[(\phi^f)^g(z) = \frac{z^2 + cmz -m}{\lambda_2z+cn}.\]
with all coefficients in $\mathcal{O}_L$, where $L=K(c)$, a finite extension of $L$. We can calculate the resultant 

\begin{align*}
\Res((\phi^f)^g(z))&=-\lambda_2^2m-\lambda_2c^2mn+c^2n^2\\
&=c^2n^2+c^2mn-m.
\end{align*}

Since the resultant is in $\mathcal{O}_K$, it's enough to verify that $\pi\nmid\Res((\phi^f)^g(z))$. Recall that $\pi\nmid m$, so we know that $\Res((\phi^f)^g(z)) \not\equiv 0 \pmod{\pi}$, so $\phi(z)$ has potential good reduction.

Using the above defined substitutions, $\lambda_1= c^2(m+n)+1$ and $\lambda_2=1-c^2n$. In particular,

\[\Res(\phi) = 1-\lambda_1\lambda_2 = c^2(c^2n^2+c^2mn-m).\]
Note that 
\begin{equation}\label{res}\Res(\phi) = c^2\Res((\phi^f)^g(z)).\end{equation}

%Case 2
\textbf{Case 2}\\
Suppose that $e_1=e_2=e$, but $a_1+a_2\neq a\pi^e$ for any $a\in \mathcal{O}_K$. We may write $a_1+a_2=a\pi^d$ with $d<e$ and $\pi\nmid a$. 

\begin{align*}
\Res(\phi) &= 1-\lambda_1\lambda_2\\
&= -\pi^{e}(a_1+a_2+a_1a_2\pi^{e})\\
&=-\pi^{e+d}(a+ a_1a_2\pi^{e-d}).
\end{align*}

Now let $f(z)=z-1$ and $g(z) = cz$ with $c=\sqrt{\pi^{e+d}}$. Conjugating by $f$ then $g$ gives us equation (3). Now write $\lambda_1+\lambda_2-2=\pi^e(a_1+a_2)=a\pi^{e+d}=ac^2$ and $1-\lambda_2=a_2\pi^e=cn$, then substitute to get
\[(\phi^f)^g(z) = \frac{z^2 + acz -a}{\lambda_2z+n}.\]
The resultant is 
\begin{align*}
\Res((\phi^f)^g) &= -\lambda_2^2-a\lambda_2cn+n^2\\
&=-a+acn+n^2.
\end{align*} 
Since $\pi$ divides both the 2nd and 3rd terms, but not $a$, $\Res((\phi^f)^g)\not\equiv 0 \pmod{\pi}$. Thus $\phi(z)$ has potential good reduction.

Using the above defined substitutions, $\lambda_1= ac^2+1+cn$ and $\lambda_2=1-cn$. In particular,

\[\Res(\phi) = 1-\lambda_1\lambda_2 = k^2(-a+acn+n^2).\]
Note that again, $\Res(\phi) = c^2\Res((\phi^f)^g(z))$.

%Case 3
\textbf{Case 3}\\
Suppose $e_1=e_2=e$ and $a_1+a_2= a\pi^e$ but $a+a_1a_2\not\equiv 0 \pmod{\pi}$.
\begin{align*}
\Res(\phi) &= 1-\lambda_1\lambda_2\\
&= -(a_1\pi^{e_1}+a_2\pi^{e_2}+a_1a_2\pi^{e_1+e_2})\\
&= -\pi^{e}(a_1+a_2+a_1a_2\pi^{2e})\\
&=-\pi^{2e}(a+a_1a_2).
\end{align*}

By our assumption that $a+a_1a_2\not\equiv 0 \pmod{\pi}$, we have that $\pi$ has order $2e$ in $\Res(\phi)$. As before, conjugate $\phi(z)$ first by $f(z)=z-1$ then by $g(z) = cz$, this time with $c=\pi^e$ to get an equation identical to (3). Now note that $\lambda_1+\lambda_2-2 = (a_1+a_2)\pi^e=a\pi^{2e}=ac^2$, and let $1-\lambda_2=-a_2\pi^e=-a_2c$. With these substitutions, we have

\begin{equation}(\phi^f)^g(z)= \frac{z^2+acz -a}{\lambda_2z-a_2}.\end{equation}

Here the resultant is $\Res((\phi^f)^g)= a_2^2+aa_2\lambda_2c-a\lambda_2^2$.
\begin{align*}
\Res((\phi^f)^g)\equiv 0 \pmod{\pi}& \iff a_2^2-a\lambda_2^2+aa_2\lambda_2c\equiv 0 \pmod{\pi}\\
&\iff a_2^2-a\lambda_2^2 \equiv 0 \pmod{\pi}\\
&\iff a_2(a\pi^e-a_1)-a(1+2a_2\pi^e+a^2\pi^{2e})\equiv 0 \pmod{\pi}\\
&\iff a_1a_2+a\equiv 0 \pmod{\pi}.
\end{align*}

Since we assumed $a_1a_2+a\not\equiv 0 \pmod{\pi}$, it follows that $\Res((\phi^f)^g)\not\equiv 0 \pmod{\pi}$, so $\phi(z)$ has potential good reduction.

Using the above defined substitutions, $\lambda_1= ac^2+1-a_2c$ and $\lambda_2=1-a_2c$. In particular we may rewrite
\[\Res((\phi^f)^g)= a_2^2+-a-aa_2c\]
\[\Res(\phi) = 1-\lambda_1\lambda_2 = c^2(a_2^2-a-aa_2c).\]

Note that once again, $\Res(\phi) = c^2\Res((\phi^f)^g(z))$.
\end{proof}

%Number fields
\section{Global Results}
\subsection{Quadratic Polynomials}

%(H') Normal form z^2+c, quadratic polynomials
\begin{lemma}\label{quadratic}  \cite[Statement 4.12]{ADS} For $K$ of characteristic not equal to $2$, every quadratic polynomial $f(z)=Az^2+Bz+C \in K[z]$ is linearly conjugate over $K$ to a unique polynomial of the form $\phi(z)=z^2+c$ . \end{lemma}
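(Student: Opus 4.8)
The plan is to produce an explicit linearizing conjugation and then argue that the resulting constant $c$ is forced. Since $f$ has degree $2$ we have $A \neq 0$, and I will conjugate by an affine map $g(z) = \alpha z + \beta$ with $\alpha \in K^*$ and $\beta \in K$; note that affine maps are exactly the elements of $\PGL_2$ fixing $\infty$, and both $f$ and every $z^2+c$ fix $\infty$ as their unique totally ramified (superattracting) fixed point, so restricting to affine conjugations is the natural choice (and, as I note at the end, essentially the only one).

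First I would compute $f^g = g^{-1}\circ f\circ g$ directly. Expanding $f(\alpha z + \beta)$ and applying $g^{-1}(w) = (w-\beta)/\alpha$ yields
\[ f^g(z) = A\alpha\, z^2 + (2A\beta + B)\, z + \frac{A\beta^2 + B\beta + C - \beta}{\alpha}. \]
To reach the form $z^2 + c$ I impose that the leading coefficient be $1$ and the linear coefficient vanish, i.e.\ $A\alpha = 1$ and $2A\beta + B = 0$. The first gives $\alpha = 1/A$, and the second gives $\beta = -B/(2A)$; this is where the hypothesis $\operatorname{char} K \neq 2$ enters, guaranteeing that $2$ is invertible in $K$ so that $\beta \in K$. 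Substituting these values produces a specific $c \in K$ (namely $c = AC + B/2 - B^2/4$), which establishes existence.

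For uniqueness I would show that the normal form admits no nontrivial affine conjugacy to another normal form. Suppose $g(z) = \alpha z + \beta$ conjugates $z^2 + c$ to $z^2 + c'$. Applying the displayed coefficient formula with $A = 1$, $B = 0$, $C = c$, the leading coefficient $\alpha$ must equal $1$ and the linear coefficient $2\beta$ must vanish; since $\operatorname{char} K \neq 2$ this forces $\beta = 0$, so $g = \mathrm{id}$ and hence $c' = c$. Thus the constant is determined and the normal form is unique.

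The computations are routine; the only genuine point to watch is the role of $\operatorname{char} K \neq 2$, which is used twice --- once to solve for $\beta$ in the existence step and once to force $\beta = 0$ in the uniqueness step. If one wishes to allow conjugation by an arbitrary element of $\PGL_2(K)$ rather than only affine maps, I would add the observation that any such conjugacy must carry the totally ramified fixed point of $z^2 + c$ to that of $z^2 + c'$; since for $c \neq 0$ this fixed point is uniquely $\infty$ (and the exceptional map $z^2$ has two totally ramified fixed points, hence cannot be conjugate to any $z^2 + c'$ with $c' \neq 0$), the conjugacy necessarily fixes $\infty$ and is therefore affine, reducing to the case already handled.
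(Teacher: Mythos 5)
Your proof is correct: the paper does not prove this lemma at all (it simply cites \cite[Statement 4.12]{ADS}), and your completing-the-square computation, with $\operatorname{char} K \neq 2$ invoked exactly where $2$ must be inverted (solving for $\beta$ in existence, forcing $\beta=0$ in uniqueness), is the standard argument behind that citation. Your closing remark extending uniqueness to arbitrary $\PGL_2(K)$-conjugacy via totally ramified fixed points, with the exceptional map $z^2$ treated separately, is a sound bonus beyond the affine case that the lemma itself requires.
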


\begin{theorem} Let $\phi(z) \in K[z]$ be a quadratic polynomial over a number field. Then $\phi$ has potential good reduction if and only if $[\phi]$ is an integral point in the moduli space $M_2$. \end{theorem}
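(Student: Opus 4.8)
The plan is to reduce to the normal form $\phi(z)=z^2+c$ supplied by Lemma \ref{quadratic}, read off its moduli invariants, and then exhibit a single explicit conjugation --- valid at every prime simultaneously --- realizing good reduction over a quadratic extension. First I would compute the fixed-point data of $z^2+c$: the point at infinity is superattracting, so $\lambda_3=0$, while the finite fixed points have multipliers $\lambda_{1,2}=1\pm\sqrt{1-4c}$. Hence $\sigma_1=\lambda_1+\lambda_2=2$ and $\sigma_3=0$ (consistent with (\ref{3sigmas})), while $\sigma_2=\lambda_1\lambda_2=4c$. Therefore $[\phi]\in M_2(\mathcal{O}_K)$ if and only if $4c\in\mathcal{O}_K$, equivalently (as noted before) the multipliers are algebraic integers.

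For the forward direction, suppose $\phi$ has potential good reduction, witnessed by some $f\in\PGL_2(\bar K)$ with $\phi^f$ of good reduction over a finite extension $L/K$. The same $f$ gives good reduction at every completion, so at each prime $\mathfrak{p}$ of $K$ the map has potential good reduction over $K_{\mathfrak p}$; Theorem \ref{MR} then gives $\sigma_1,\sigma_2\in\mathcal{O}_{K_{\mathfrak p}}$ for all $\mathfrak p$ (alternatively, Lemma \ref{julia} forces every multiplier to be nonrepelling at every place, so each $\lambda_i\in\mathcal{O}_L$). Since $\sigma_1,\sigma_2\in K$, intersecting over all primes yields $\sigma_1,\sigma_2\in\mathcal{O}_K$, i.e. $[\phi]$ is integral.

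The substantive direction is the converse, where the decisive feature of polynomials is that $\sigma_1=2$ forces $\lambda_1+\lambda_2=2$. Assume $4c\in\mathcal{O}_K$ and set $s=\sqrt{1-4c}$, which is integral over $\mathcal{O}_K$ and lies in $L=K(s)$. If $\lambda_1\lambda_2=4c=1$ (the case $c=\tfrac14$), Lemma \ref{NFL}(b) and Proposition \ref{form1} already give a model of resultant $1$, so assume $c\neq\tfrac14$. By Lemma \ref{NFL}(a), $\phi$ is conjugate to $\frac{z^2+\lambda_1 z}{\lambda_2 z+1}$ (label so that $\lambda_2=1-s$); conjugating by $z\mapsto z-1$ and invoking $\lambda_1+\lambda_2=2$ makes the linear and constant terms vanish, exactly as in the opening computation of Proposition \ref{form2}, leaving $\frac{z^2}{(1-s)z+s}$. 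Conjugating once more by $z\mapsto sz$ cancels the powers of $s$ and produces
\[\psi(z)=\frac{z^2}{(1-s)z+1},\qquad \Res\bigl(z^2,\,(1-s)z+1\bigr)=1.\]
Since $s\in\mathcal{O}_L$, the coefficients of $\psi$ lie in $\mathcal{O}_L$ and its resultant is a unit, so $\psi$ has good reduction at every prime of $L$; as $\psi$ is conjugate to $\phi$ over $L$, the map $\phi$ has potential good reduction.

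The main obstacle is precisely the passage from the local theorem to a global statement: Theorem \ref{MR} only supplies, prime by prime, possibly different conjugations (the ramified scalings of Proposition \ref{form2}, Cases 1--3), whereas here one must produce a single conjugation working at all primes at once. I expect the crux to be recognizing that the polynomial constraint $\lambda_1+\lambda_2=2$ is exactly what allows the uniform scaling by $s=\sqrt{1-4c}$ to send the resultant $1-4c$ to a global unit; checking thereafter that $\psi$ has integral coefficients and unit resultant --- hence good reduction at every place of $L$, including those above $2$ where $z^2+c$ itself can fail --- is then routine.
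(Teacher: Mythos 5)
Your proposal is correct and follows essentially the same route as the paper: the same reduction to $z^2+c$ via Lemma \ref{quadratic}, the same identification of the moduli point as $(2,4c)$, the same appeal to Theorem \ref{MR} (equivalently Lemma \ref{julia}) prime-by-prime for the forward direction, and, for the converse, a single explicit conjugation over the quadratic extension $K(\sqrt{1-4c})$ landing on an integral model with unit resultant, which gives good reduction at all primes simultaneously. The only difference is the choice of final model --- the paper simply translates a finite fixed point to the origin to get the monic integral polynomial $z^2+(1+\sqrt{1-4c})z$, whereas you detour through the normal form of Lemma \ref{NFL}(a) to reach $\frac{z^2}{(1-s)z+1}$ --- a slightly longer path to the same kind of witness (and, incidentally, your formulas correct the paper's typographical $\sqrt{1-c}$ to the intended $\sqrt{1-4c}$).
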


\begin{proof}
By Lemma \ref{quadratic}, we may assume $\phi(z)=z^2+c$.

The function $\phi(z)$ has fixed points at $q_{\pm} = \frac{1\pm \sqrt{1-4c}}{2}$, with corresponding multipliers $\lambda_{q_{\pm}}=1\pm \sqrt{1-4c}$.  

Suppose that $\phi(z)$ has potential good reduction. Then by Theorem \ref{MR}, all multipliers are $\mathfrak{p}$-adically integral for every prime $\mathfrak{p}$. We can calculate:

\begin{align*}
|\lambda_{q_+}|_{\mathfrak{p}} \leq1&\Leftrightarrow |1-4c|^{1/2}_{\mathfrak{p}} \leq 1\\
&\Leftrightarrow |1-4c|_{\mathfrak{p}} \leq 1\\
&\Leftrightarrow |4c|_{\mathfrak{p}} \leq 1.\\
\end{align*}

So potential good reduction means $|4c|_p\leq 1$. Or, in other words, that $4c$ is a $p$-adic integer for that $\mathfrak{p}$.  The symmetric functions on the multipliers of the fixed points of $\phi(z)$ are $\sigma_1 = 2$ and $\sigma_2 = 4c$, so $\phi(z)$ corresponds to the point $(2, 4c)$, which is integral the moduli space.

Now suppose that  $[\phi(z)] = [z^2+c]$ is integral in the moduli space. In particular, it corresponds to the point $(2,4c)$ which is integral when $|4c|_p \leq 1$. The following steps show how to find an $f(z) \in \PGL_2$ to conjugate by to obtain a rational map with good reduction. First, we find the fixed points of $\phi(z)$, which are $z = \frac{1\pm \sqrt{1-c}}{2}$. Then let $f(z) = z + \frac{1+ \sqrt{1-c}}{2}$. Conjugating gives $\phi^f(z) = z^2 + (1+ \sqrt{1-c})z$, which has good reduction over the quadratic extension field $K[\sqrt{1-c}]$. Therefore $\phi(z)$ has potential good reduction.  \end{proof}

We have some additional results when $K=\QQ$:

\begin{lemma}
Let $\phi(z) = z^2 + \frac{k}{4}$ with $k\in\ZZ$. Then if $k\equiv 0,1 \pmod{4}$, there exist $B, C\in\ZZ$ such that $\phi(z)$ is conjugate to $z^2+Bz+C$, which has good reduction.
\end{lemma}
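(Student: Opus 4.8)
The plan is to make the conjugation implicit in the preceding theorem explicit, and to arrange it so that it lands inside $\ZZ[z]$ rather than in a quadratic extension. Since $\phi(z) = z^2 + c$ with $c = k/4$ is already in the normal form of Lemma \ref{quadratic}, and since both $\phi$ and the desired target fix $\infty$ totally ramified, any conjugating $f \in \PGL_2(\QQ)$ must fix $\infty$ and hence be affine. I would first record that among affine maps $f(z) = \alpha z + \beta$ the leading coefficient of $\phi^f$ equals $\alpha$, so to land on a \emph{monic} quadratic $z^2 + Bz + C$ one needs $\alpha = 1$; that is, the monic-preserving conjugations are exactly the translations $f(z) = z + \beta$. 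A direct computation then gives
\[
\phi^f(z) = z^2 + 2\beta z + \left(\beta^2 - \beta + \tfrac{k}{4}\right),
\]
so the problem reduces to choosing $\beta \in \QQ$ that makes both coefficients integral.

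Next I would solve the resulting congruence. Writing $B = 2\beta$, the constant term becomes $C = (B^2 - 2B + k)/4$, so the requirement is an integer $B$ with $B^2 - 2B + k \equiv 0 \pmod 4$. Splitting on the parity of $B$: if $B$ is even then $B^2 - 2B \equiv 0 \pmod 4$, which forces $k \equiv 0 \pmod 4$; if $B$ is odd then $B^2 - 2B \equiv 3 \pmod 4$, which forces $k \equiv 1 \pmod 4$. This is exactly where the hypothesis $k \equiv 0, 1 \pmod 4$ is used, and it is sharp: the residues $k \equiv 2, 3 \pmod 4$ admit no integral $B$, reflecting that those maps genuinely require a quadratic extension to acquire good reduction. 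Concretely, for $k \equiv 0$ one takes $B = 0$ (so $\phi$ already has integer coefficients), and for $k \equiv 1$ one takes $B = 1$, i.e. $\beta = 1/2$, giving $\phi^f(z) = z^2 + z + (k-1)/4$ with $(k-1)/4 \in \ZZ$ and $f(z) = z + \tfrac12 \in \PGL_2(\QQ)$.

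Finally I would verify good reduction of the target map. Because $z^2 + Bz + C$ is monic with coefficients in $\ZZ$, its homogenization $[X^2 + BXY + CY^2 : Y^2]$ has unit leading coefficients and trivial content, so $\Res(z^2+Bz+C) = 1$; equivalently, the reduction modulo any prime remains a degree-two map since the leading coefficient $1$ never vanishes. Hence $z^2 + Bz + C$ has good reduction at every prime, which completes the argument.

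I expect the only real subtlety to lie in the second step: pinning down that translations are the sole monic-preserving conjugations and then matching the parity analysis to the stated hypothesis so that the congruence $B^2 - 2B + k \equiv 0 \pmod 4$ is solvable precisely when $k \equiv 0, 1 \pmod 4$. The good-reduction check in the last step is automatic for monic integral polynomials, and the conjugation computation in the first step is routine.
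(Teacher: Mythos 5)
Your proposal is correct and follows essentially the same route as the paper: conjugate by the translation $f(z)=z+\tfrac{B}{2}$, reduce integrality of the constant term to the congruence $B^2-2B+k\equiv 0 \pmod 4$, and settle it with the explicit choices $B=0$ for $k\equiv 0$ and $B=1$ for $k\equiv 1 \pmod 4$. Your parity analysis of $B$ is a cleaner (and sign-error-free) version of the paper's quadratic-residue argument, and your explicit resultant check of good reduction for monic integral quadratics makes precise what the paper leaves implicit, but these are refinements rather than a different proof.
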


\begin{proof}
Let $f(z)=z+\frac{B}{2}$, (where $B$ is as yet to be determined). Then conjugation gives
\[f^{-1}\circ \phi\circ f(z) = z^2 + Bz +\left(\frac{B^2}{4}-\frac{B}{2}+\frac{k}{4}\right).\]
For whatever $B\in\ZZ$ we choose, it must hold that $C=\frac{B^2-2B+k}{4}\in\ZZ$. That is, for some $x\in\ZZ$, $B^2-2B+k = 4x$. Solving for $B$ yields
\[B = 1\pm\sqrt{1- k+4x}\in\ZZ.\]
So there is some $y\in\ZZ$ for which $\sqrt{1- k - 4x}=y$. Squaring both sides and rearranging, we get that $y^2 \equiv 1- k \pmod{4}$. The only quadratic residues in $\ZZ/4\ZZ$ are $0$ and $1$, so it must hold that $k\equiv 0,1 \pmod{4}$.

The case $k\equiv 0 \pmod{4}$ is exactly the case where $\phi(z)=z^2+C$ with $C = \frac{k}{4}\in\ZZ$, so no $B$ needs to be chosen, as $\phi(z)$ already has good reduction. In the other case, $k \equiv 1 \pmod{4}$, we set $B=1$, and $C = \frac{k-1}{4}$.
\end{proof}

\begin{proposition}
Let $\phi(z) = z^2  +  \frac{k}{4}$ with $k\in\ZZ$. Then $\phi(z)$ is linearly conjugate to a morphism $g(z)\in\QQ(z)$ with good reduction over $\QQ$ if only if $k \equiv 0, 1 \pmod{4}$.
\end{proposition}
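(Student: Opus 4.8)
The moduli point of $\phi(z)=z^2+k/4$ is $(\sigma_1,\sigma_2)=(2,k)$, which is integral for every $k\in\ZZ$, so by Theorem \ref{MR} the map $\phi$ always has \emph{potential} good reduction; the real content of the statement is whether good reduction can be achieved over $\QQ$ itself, without passing to an extension. For the ``if'' direction, when $k\equiv 0,1\pmod 4$ the preceding lemma conjugates $\phi$ over $\QQ$ to $z^2+Bz+C$ with $B,C\in\ZZ$, and such an integral quadratic polynomial has resultant equal to a unit at every prime, so it has good reduction over all of $\QQ$. Thus only the ``only if'' direction requires work.

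For that direction I would reduce to a purely local question at the prime $2$. For every odd prime $p$ one has $k/4\in\ZZ_p$, so $z^2+k/4$ already has good reduction at $p$, and the only possible obstruction lives at $p=2$. If $\phi$ is $\PGL_2(\QQ)$-conjugate to some $g\in\QQ(z)$ with good reduction over $\QQ$, then since $\PGL_2(\QQ)\subseteq\PGL_2(\QQ_2)$ and good reduction over $\QQ$ entails good reduction at $2$, the map $\phi$ has good reduction over the local field $\QQ_2$. It then suffices to show that $2$-adic good reduction forces $k\equiv 0,1\pmod 4$.

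The key step is to arrange that the good-reduction conjugate is a \emph{polynomial}. The point $\infty$ is the unique superattracting (totally ramified) fixed point of $\phi$, and multipliers are conjugation invariants, so any good-reduction conjugate $G=\phi^f$ has its superattracting fixed point at the $\QQ_2$-rational point $P=f^{-1}(\infty)$. Since $\PGL_2(\ZZ_2)$ acts transitively on $\PP^1(\QQ_2)$, I can pick $h\in\PGL_2(\ZZ_2)$ with $h(\infty)=P$; conjugation by an element of $\PGL_2(\ZZ_2)$ preserves good reduction, so $G^h$ still has good reduction and now fixes $\infty$ as a totally ramified point, hence is a degree-two polynomial. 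Because $\PGL_2$-conjugate polynomials must be affinely conjugate (the totally ramified fixed point has to be carried to $\infty$), this realizes $\phi$ as an affine $\QQ_2$-conjugate of a quadratic polynomial with good reduction at $2$.

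It then remains to run the explicit affine computation: writing the conjugate of $z^2+k/4$ by $z\mapsto\alpha z+\beta$ as $\alpha z^2+2\beta z+C$ with $C=(\beta^2-\beta+k/4)/\alpha$, a short valuation analysis of the associated primitive model shows that good reduction at $2$ forces $\alpha\in\ZZ_2^{*}$, $2\beta\in\ZZ_2$, and $C\in\ZZ_2$; the two cases $v_2(\beta)\ge 0$ and $v_2(\beta)=-1$ then give $k\equiv 0$ and $k\equiv 1\pmod 4$ respectively, which is exactly the congruence computation already carried out in the preceding lemma. The main obstacle is precisely the middle step: a priori a non-polynomial rational conjugate might achieve good reduction where no polynomial one does, and it is the reduction to the polynomial case --- via the superattracting fixed point together with the $\PGL_2(\ZZ_2)$-invariance of good reduction --- that makes the elementary mod-$4$ computation sufficient.
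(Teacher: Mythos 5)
Your proposal is correct, but it proves the hard (``only if'') direction by a genuinely different route than the paper. The paper's proof is a brute-force computation over $\QQ$: it writes $f(z)=\frac{az+b}{cz+d}$, expands the full conjugate $\phi^f$, and splits into the cases $c=0$, $d=0$, and $cd\neq 0$; the last case is disposed of by a $2$-adic parity descent (both $c$ and $d$ must be even, rewrite and repeat for arbitrarily large powers of $2$), which is the least transparent step of the published argument. You instead localize the problem at the prime $2$ (correctly noting that $k/4\in\ZZ_p$ kills any obstruction at odd $p$), and then make a structural reduction to the affine case: the totally ramified fixed point of any good-reduction conjugate $\phi^f$ is the $\QQ_2$-rational point $f^{-1}(\infty)$, the group $\PGL_2(\ZZ_2)$ acts transitively on $\PP^1(\QQ_2)$, and conjugation by an element of $\PGL_2(\ZZ_2)$ preserves good reduction (a standard fact, cf.\ \cite{ADS}, which the paper never invokes), so one may assume the good-reduction model is itself a quadratic polynomial and hence an affine conjugate of $\phi$. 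After that, your valuation analysis of $\alpha z^2+2\beta z+C$ is correct ($\alpha\in\ZZ_2^*$, $v_2(\beta)\geq -1$, and the two cases $v_2(\beta)\geq 0$, $v_2(\beta)=-1$ give $k\equiv 0$ and $k\equiv 1 \pmod 4$), and it is essentially the same mod-$4$ computation as the paper's lemma and its $c=0$ case. What your route buys is conceptual clarity and uniformity: all M\"obius transformations are handled at once, and the paper's sketchy descent in the $cd\neq 0$ case disappears; the cost is importing the $\PGL_2(\ZZ_2)$-invariance of good reduction, which lies outside the paper's explicitly stated toolkit. Two small points to tidy in a final write-up: the assertion that conjugate polynomials are affinely conjugate uses uniqueness of the totally ramified fixed point, which fails exactly for $k=0$ (the map $z^2$ has two such points), but $k=0\equiv 0\pmod 4$ so that case is trivial; and the phrase ``$\phi$ has good reduction over $\QQ_2$'' should read ``$\phi$ has a $\PGL_2(\QQ_2)$-conjugate with good reduction at $2$.''
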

\begin{proof}
The forward direction is a result of the previous lemma. For the other direction, suppose  $\phi(z) = z^2 + \frac{k}{4}$ and $f(z) = \frac{az+b}{cz+d}\in \PGL_2$. Then the full conjugation, $g(z) = \phi^f(z)$ looks like:
 \[ \frac{(4a^2d + c^2dk - 4bc^2)z^2 + (8abd + 2cd^2k - 8bcd)z + 4b^2d + d^3k - 4bd^2}{(4ac^2 - 4a^2c - c^3k)z^2 + (8acd - 8abc -  2c^2dk)z + 4ad^2 - 4b^2c - cd^2k }.\]  

First, note that $c$ and $d$ cannot both be $0$, since $f(z) \in \PGL_2$. Now consider the case $c = 0$. Substituting and reducing in the above equation gives
\[g(z) =  \frac{4a^2z^2 + 8abz + 4b^2 + d^2k - 4bd}{4ad}.\]

For $g(z)$ to have good reduction, it must hold that $d|a$, $d|2b$ and $2|d$. So we can change notation and replace $a$ with $ad$ and $2b$ with $bd$. Then we can rewrite $g(z)$ as
\[ g(z) = \frac{4a^2z^2 + 4abz + b^2 - 2b + k}{4a}.\]

Now we need $\frac{b^2 - 2b + k}{4a}  = x $ for some $ x \in \ZZ$. Solving for $b$, we get 

\[b = 1 \pm \sqrt{1 - k + 4ax}.\]

So there is some $y \in \ZZ$ such that $y^2 = 1- k + 4ax$. In other words, $y^2 \equiv 1 - k \pmod{4}$. Since the only quadratic residues in $\ZZ/4\ZZ$ are $0$ and $1$, we have that $k \equiv  0, 1 \pmod{4}$ when $c = 0$. 

Next, consider the case $d = 0$. Then we have 
\[g(z) = \frac{4bcz^2}{(4a^2 - 4ac + c^2k)z^2 + 8abz + 4b^2 }.\]
For $g(z)$ to have good reduction the coefficient $-4bc$ in the numerator must cancel. So we have  these divisibility properties: $c|2a$ and $c|b$. As before, replace $2a$ with $ac$ and $b$ wtih $bc$. Then we can rewrite $g(z)$ as
\[g(z) = \frac{4bz^2}{(a^2 - 2a + k)z^2 + 4abz + 4b^2}. \]

Now, we just need to ensure that $\frac{a^2 - 2a + k}{4b} = x$ for some $x \in \ZZ$. Solving for $a$, we get 

\[a = 1 \pm \sqrt{1 - k + 4bx}. \]

There must be some $y \in \ZZ$ such that $y^2 = 1 - k - 4bx$, i.e. $y^2 \equiv 1-k \pmod{4}$. Thus we have that $k \equiv 0,1 \pmod{4}$. 

Now consider the case where both $c$ and $d$ are nonzero. Without cancellation, $g(z)$ reduces to the constant $-\frac{d}{c}$ in $\ZZ/2\ZZ$, indicating bad reduction. The only possibility for good reduction requires that each monomial be divisible by $2$, requiring that $c$ and $d$ both be even. 

Knowing that $c$ and $d$ are both even, we may rewrite $f(z)$ as $f(z) =  \frac{az+b}{2cz+2d}$. Then $g(z)$ once again reduces to  $-\frac{d}{c}$ in $\ZZ/2\ZZ$ unless there is cancellation. Suppose that $2^n | c$ and $2^n | d$. Then we can repeat the process, continuing to rewrite $f(z)$ as above and recalculating  $g(z)$ , indicating that $g(z)$ reduces to  $-\frac{d}{c}$ for arbitrarily large $n$. Since there are no $c$ and $d$ in  $\ZZ$ which would permit cancellation, $\phi(z) = z^2 + \frac{k}{4}$ cannot be conjugate to a function with good reduction over $\QQ$ unless $k \equiv 0, 1 \pmod{4}$, as in the above cases.
\end{proof}
%rational maps
\subsection{Quadratic Rational Maps}

Our result over principal ideal domains is a corollary to Theorem \ref{MR} and the proof follows an identical format, so we will be brief.

\begin{corollary}
Let $K$ be a field such that $\mathcal{O}_K$ is a principal ideal domain. Then a degree $2$ rational map $\phi(z)\in K(z)$ has potential good reduction if and only if $[\phi]\in M_2(\mathcal{O}_K)$.
\end{corollary}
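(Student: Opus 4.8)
The plan is to reduce the global statement to the local Theorem \ref{MR} by working one prime at a time, using that for a principal ideal domain (indeed for any Noetherian integrally closed domain) an element of $K$ lies in $\mathcal{O}_K$ precisely when it lies in every localization, i.e.\ $\mathcal{O}_K = \bigcap_{\mathfrak{p}} \mathcal{O}_{K,\mathfrak{p}}$. Each localization is a discrete valuation ring whose completion $K_\mathfrak{p}$ is a local field, so Theorem \ref{MR} applies at every $\mathfrak{p}$. Since the invariants $\sigma_1,\sigma_2$ attached to $\phi$ lie in $K$, membership in $M_2(\mathcal{O}_K)$ is detected locally, and this is the bridge between the two settings.

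For the forward direction I would show that potential good reduction over $K$ forces potential good reduction over each $K_\mathfrak{p}$: if $f\in\PGL_2(\bar K)$ conjugates $\phi$ to a map with good reduction at every prime of a finite extension $L/K$, then fixing a prime $\mathfrak{P}$ of $L$ above $\mathfrak{p}$ and passing to completions exhibits $\phi$ as having good reduction over the local field $L_\mathfrak{P}\supseteq K_\mathfrak{p}$. Theorem \ref{MR} then gives $[\phi]\in M_2(\mathcal{O}_{K_\mathfrak{p}})$, i.e.\ $v_\mathfrak{p}(\sigma_1),v_\mathfrak{p}(\sigma_2)\geq 0$. As this holds for all $\mathfrak{p}$, the intersection property yields $\sigma_1,\sigma_2\in\mathcal{O}_K$, hence $[\phi]\in M_2(\mathcal{O}_K)$.

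For the converse, assume $[\phi]\in M_2(\mathcal{O}_K)$; as the excerpt records, this makes all three multipliers algebraic integers, so I would place $\phi$ in one of the normal forms of Lemma \ref{NFL}. If $\lambda_1\lambda_2=1$, Proposition \ref{form1} already produces a model of unit resultant over $K(\sqrt{1-\lambda_3})$, so good reduction holds at every prime at once. If $\lambda_1\lambda_2\neq 1$, I would take the single conjugation of Proposition \ref{form2}, namely $f(z)=z-1$ followed by $g(z)=cz$, but now with the uniform choice $c=\sqrt{1-\lambda_1\lambda_2}$ valid simultaneously at all primes. Clearing the common factor $c^2$ turns $\phi$ into
\[\psi(z)=\frac{z^2+\mu z+\lambda_3}{\lambda_2 z+\rho},\qquad \mu=\frac{\lambda_1+\lambda_2-2}{c},\quad \rho=\frac{1-\lambda_2}{c},\]
and a direct expansion gives $\Res(\psi)=\rho^2-\lambda_2\mu\rho+\lambda_2^2\lambda_3=(1-\lambda_1\lambda_2)/c^2=1$, recovering the relation $\Res(\phi)=c^2\Res(\psi)$ of the local propositions. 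Thus $\psi$ has unit resultant, hence good reduction at every prime of $\mathcal{O}_L$ with $L=K(\sqrt{1-\lambda_1\lambda_2})$.

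The main obstacle is exactly this globalization: the local proof of Proposition \ref{form2} chooses a scaling factor and a field extension that depend on the prime (its three cases use $c=\sqrt{\pi^{e_1}}$, $\sqrt{\pi^{e+d}}$, $\pi^e$), and a priori these need not assemble into a single conjugation. The resolution is that the choice is governed only by the valuation $v_\mathfrak{p}(c)=\tfrac12 v_\mathfrak{p}(1-\lambda_1\lambda_2)$, which the one element $c=\sqrt{1-\lambda_1\lambda_2}$ realizes at every prime. The delicate point is then to verify that $\mu$ and $\rho$ stay integral at the bad primes: integrality of $\lambda_3$ gives $v_\mathfrak{p}(\lambda_1+\lambda_2-2)\geq v_\mathfrak{p}(1-\lambda_1\lambda_2)\geq v_\mathfrak{p}(c)$ for $\mu$, while for $\rho$ one invokes Lemma \ref{L3integral} to check that in each of its three failure cases $v_\mathfrak{p}(1-\lambda_2)\geq \tfrac12 v_\mathfrak{p}(1-\lambda_1\lambda_2)=v_\mathfrak{p}(c)$. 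The principal ideal domain hypothesis enters twice: it makes integrality over $\mathcal{O}_K$ a purely local condition in the forward direction, and it ensures $\mathcal{O}_L$ is again a Dedekind domain on which ``good reduction at every prime'' is the correct global notion, certified here by the unit resultant.
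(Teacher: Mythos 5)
Your proof is correct, and its skeleton is the paper's: pass to the normal forms of Lemma \ref{NFL}, dispose of the case $\lambda_1\lambda_2=1$ by Proposition \ref{form1} globally, and in the case $\lambda_1\lambda_2\neq 1$ globalize the local conjugation $f(z)=z-1$, $g(z)=cz$ by making a single choice of scaling factor valid at all primes. The genuine difference is how that factor is built. The paper invokes the PID hypothesis to write each bad prime as $\mathfrak{p}_i=(\pi_i)$ and sets $k=\sqrt{\prod_i \pi_i^{e_i}}$, then concludes prime by prime from the relation (\ref{res}); your $c=\sqrt{1-\lambda_1\lambda_2}=\sqrt{\Res(\phi)}$ generates the same ideal, since every prime dividing $\Res(\phi)=1-\lambda_1\lambda_2$ is automatically one where $\lambda_1\lambda_2\equiv 1$, so the two conjugations agree up to a unit. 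Your packaging buys three things: the conjugated map has resultant exactly $1$ (I have checked your identity $\rho^2-\lambda_2\mu\rho+\lambda_2^2\lambda_3=(1-\lambda_1\lambda_2)/c^2$; it is correct) rather than merely resultant prime to the bad primes; the converse direction no longer uses principality at all, since $c$ is built from an element rather than from generators of ideals, so this half works over any Dedekind domain; and the integrality of $\mu$ and $\rho$ is verified by a uniform valuation inequality that subsumes the paper's three cases --- including the configuration $e_1=e_2=e$ with $v_\mathfrak{p}(a_1+a_2)>e$ (for instance $\lambda_1+\lambda_2=2$), which the paper's Case 2 (written only for $d<e$) technically omits. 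You also spell out the forward direction via completions and $\mathcal{O}_K=\bigcap_\mathfrak{p}\mathcal{O}_{K,\mathfrak{p}}$, which the paper leaves implicit under ``identical format.'' The one step you defer --- that $v_\mathfrak{p}(1-\lambda_2)\geq v_\mathfrak{p}(c)$ at each bad prime --- does go through in all failure cases of Lemma \ref{L3integral}: when $e_1\neq e_2$ one has $v_\mathfrak{p}(1-\lambda_1\lambda_2)=\min(e_1,e_2)\leq 2e_2$ unconditionally, and when $e_1=e_2=e$ the integrality of $\lambda_3$ forces $v_\mathfrak{p}(1-\lambda_1\lambda_2)\leq 2e$, since otherwise $v_\mathfrak{p}(2-\lambda_1-\lambda_2)=2e<v_\mathfrak{p}(1-\lambda_1\lambda_2)$ would make $\lambda_3$ non-integral; either way $v_\mathfrak{p}(c)\leq v_\mathfrak{p}(1-\lambda_2)$.
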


\begin{proof}Recall from the proof of the local fields case that the change of variables we employed gave us a resultant which was no longer divisible by the prime $\pi$ of bad reduction, or more precisely, (\ref{res}). Note that Proposition \ref{form1} holds as a global result. 

We remind the reader that $\mathfrak{p}_i=(\pi_i)$ since $\mathcal{O}_K$ is a principal ideal domain. Without loss of generality, let $\phi(z)$ be in the normal form of Lemma \ref{NFL}. To extend Proposition \ref{form2} to $\mathcal{O}_K$, we consider the prime factorization of the ideal 

 \[\Res(\phi)\mathcal{O}_K = \prod_{i=1}^n\mathfrak{p}_i^{e_i}.\]

 For each of the $\mathfrak{p}_i$ there are two cases: either $\mathfrak{p}_i$ has the property that $\lambda_1\lambda_2-1\in \mathfrak{p}_i$ or it doesn't. In the case that it does not, the proof of Proposition \ref{form2} shows that $\phi(z)$ has good reduction for that $\mathfrak{p}_i$, so we need only be concerned with $\mathfrak{p}_i$ for which $\lambda_1\lambda_2-1\in \mathfrak{p}_i$.

To that end, let $\mathfrak{p}_i$ for $i = 1, 2, \ldots, m$ be the complete list of prime ideals such that $\lambda_1\lambda_2-1\in \mathfrak{p}_i$ and $\Res(\phi) \in\mathfrak{p}_i$. Following the proof of Proposition \ref{form2}, let $e_i$ be the order of $\mathfrak{p}_i$ in $\Res(\phi)\mathcal{O}_K$, and set

\[k = \sqrt{\prod_{i=1}^m \pi_i^{e_i}}.\]

By (\ref{res}), 
\[\Res(\phi(z))\mathcal{O}_K= \Res(\phi^f(z))\prod_{i=1}^m \mathfrak{p}_i^{e_i} .\] 

In particular, the ideal generated by the resultant of the conjugated map is no longer divisible by any $\mathfrak{p}$ with the property $\lambda_1\lambda_2-1\in \mathfrak{p}$. We may conclude that $\phi(z)$ has potential good reduction.\end{proof}

\section{Examples}\label{Examples}

Here, we give several examples of finding the minimal resultant of a given degree $2$ rational map, first using methods from this article, then also by using the algorithm from \cite{molnar}. The following lemma is referenced throughout:

\begin{lemma}\label{minimal}\cite[Lemma 3.1]{molnar}
If $d$ is even and $v(\Res_d(F,G))<d$ or if $d$ is odd and $v(\Res_d(F,G))<2d$ then $[F,G]$ is an $R$-minimal model for $[\phi]$.

\end{lemma}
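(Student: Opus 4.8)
The plan is to prove the contrapositive: if $[F,G]$ fails to be $R$-minimal, then $v(\Res_d(F,G)) \geq d$ when $d$ is even and $\geq 2d$ when $d$ is odd. Recall that $[F,G]$ is $R$-minimal precisely when no change of coordinates strictly decreases the valuation of the normalized resultant, and that under conjugation by $f \in \mathrm{GL}_2(K)$ the resultant of the homogenized map satisfies $\Res(\phi^f) = (\det f)^{d(d-1)}\Res(\phi)$. I would first record this formula, splitting conjugation $\phi^f = f^{-1}\circ\phi\circ f$ into a pre-composition by $f$, which scales $\Res$ by $(\det f)^{d^2}$, and a post-composition by $f^{-1}$, which scales it by $(\det f)^{-d}$. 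Since any $f \in \mathrm{GL}_2(K)$ admits a Smith/Cartan decomposition $f = \gamma_1 \operatorname{diag}(\pi^a,\pi^b)\gamma_2$ with $\gamma_i \in \mathrm{GL}_2(R)$ (valid as $R$ is a discrete valuation ring), and since conjugation by $\mathrm{GL}_2(R)$ preserves integrality and unit content (hence fixes $v(\Res)$) while conjugation by a scalar matrix is trivial, it suffices to analyze conjugation by a single diagonal matrix $D = \operatorname{diag}(\pi^t,1)$ with $t \geq 1$.

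Next I would compute the effect of $D$ explicitly. Writing $F = \sum_i f_i x^i y^{d-i}$ and $G = \sum_i g_i x^i y^{d-i}$, conjugation by $D$ produces the model $\bigl(\pi^{-t}F(\pi^t x, y),\, G(\pi^t x, y)\bigr)$, whose $x^i y^{d-i}$-coefficients have valuations $v(f_i) + t(i-1)$ and $v(g_i)+ti$. Let $s$ be the content valuation of this model (the minimum of all these quantities); renormalizing to unit content scales the resultant by $\pi^{-2ds}$, because $\Res$ of two degree-$d$ forms is bihomogeneous of bidegree $(d,d)$. Combining with the transformation formula gives
\[ v\bigl(\Res(\widetilde{\phi^D})\bigr) = v(\Res_d(F,G)) + d(d-1)t - 2d\,s. \]
Thus a strict decrease — i.e. failure of minimality through $D$ — occurs exactly when $2s > (d-1)t$.

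The crux is to convert the content bound $s$ into a lower bound on the resultant valuation. Here I would invoke that $\Res$ of two binary $d$-forms is isobaric of weight $d^2$ for the grading assigning weight $i$ to each of $f_i$ and $g_i$. The content bound forces $v(f_i) \geq s - t(i-1)$ and $v(g_i) \geq s - ti$, so each monomial $\prod_k f_{a_k}\prod_l g_{b_l}$ of $\Res$ (with $d$ factors of each type and total weight $\sum a_k + \sum b_l = d^2$) has valuation at least $2ds - t(d^2 - d)$, whence $v(\Res_d(F,G)) \geq d\bigl(2s - (d-1)t\bigr)$. Writing $N := 2s - (d-1)t$, reducibility forces $N > 0$, i.e. $N \geq 1$; and when $d$ is odd the factor $d-1$ is even, so $N$ is even and hence $N \geq 2$. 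Since $v(\Res_d(F,G)) \geq dN$, this yields $v(\Res_d(F,G)) \geq d$ for $d$ even and $\geq 2d$ for $d$ odd, which is exactly the contrapositive.

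I expect the main obstacle to be twofold. First is the reduction step: justifying rigorously that testing minimality against diagonal conjugations alone suffices, which rests on the Cartan decomposition together with the $\mathrm{GL}_2(R)$-invariance of $v(\Res)$ and the triviality of scalar conjugation. Second is pinning down the sharp constants: the inequality $v(\Res_d(F,G)) \geq d(2s-(d-1)t)$ is comparatively routine once the isobaric weight $d^2$ is in hand, but extracting the exact thresholds $d$ and $2d$ depends entirely on the parity of $(d-1)t$, so the delicate point is the arithmetic bookkeeping rather than any single estimate.
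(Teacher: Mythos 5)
Your argument is correct, but first a point of order: this lemma is not proved in the paper at all --- it is quoted from Bruin and Molnar \cite[Lemma 3.1]{molnar} purely for use in the Examples section --- so the relevant comparison is with the source's proof, which is substantially shorter than yours. Bruin and Molnar need only the transformation formula you state at the outset: passing to any integral model of any conjugate $u\cdot\phi^A$ changes $v(\Res_d(F,G))$ by $2d\,v(u)+d(d-1)\,v(\det A)$, and since $\gcd\bigl(2d,\,d(d-1)\bigr)$ equals $d$ for $d$ even and $2d$ for $d$ odd, every achievable change in valuation is a multiple of $d$, resp.\ $2d$. As any integral model has non-negative resultant valuation, a model with $v(\Res_d(F,G))<d$ (resp.\ $<2d$) admits no strict decrease, and minimality follows --- no Cartan decomposition, no coefficient estimates. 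Your two additional ingredients are both correct but avoidable: the reduction to $D=\mathrm{diag}(\pi^t,1)$, and the isobaric-weight-$d^2$ bound, which, if you inspect what it actually establishes --- namely $v(\Res_d(F,G)) - \bigl(2ds - d(d-1)t\bigr)\geq 0$ --- is just a hands-on verification that the renormalized conjugate, being an integral model, has integral resultant; that is automatic, because $\Res_d$ is a polynomial with integer coefficients in the coefficients of the two forms. One presentational gap to fix if you keep your route: after writing $f=\gamma_1 D\gamma_2$, the coefficient bounds feeding the isobaric estimate must be applied to the $\gamma_1$-conjugate $(F',G')=\gamma_1^{-1}\circ(F,G)\circ\gamma_1$ rather than to $(F,G)$ itself; this is harmless since $v(\Res_d(F',G'))=v(\Res_d(F,G))$, but as written your $f_i$, $g_i$ silently change meaning mid-proof. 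What your approach buys is constructiveness --- it exhibits the diagonal conjugation that would witness any failure of minimality, much in the spirit of the rest of this paper; what the congruence argument buys is that the parity bookkeeping you flag as the delicate point collapses to a one-line gcd computation.
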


\begin{example} Let $\phi(z)=\frac{z^2-2z}{-2z+1}$. $\Res(\phi)=-3$, and we verify that $\lambda_1\lambda_2=(-2)(-2)\equiv 1 \pmod{3}$. Now, conjugate first by $f(z)=z-1$, then by $g(z)=\sqrt{3}$ to get \[(\phi^f)^g=\frac{z^2-2\sqrt{3}z+2}{-2z+\sqrt{3}}\]
which has resultant $1$. 

Since we are working with the $3$-adic valuation, $v(\Res_d(F,G)) =  1<2=d$. By Lemma \ref{minimal}, $\phi(z)$ is already locally minimal, and the algorithm of Bruin and Molnar would not change it.
\end{example}

\begin{example} Let $\phi(z)=\frac{z^2+9z}{z+1}$. $\Res(\phi)=-8$, and we verify that $\lambda_1\lambda_2=(9)(1)\equiv 1 \pmod{2}$. Now, conjugate first by $f(z)=z-1$, then by $g(z)=2\sqrt{2}$ to get \[(\phi^f)^g=\frac{z^2+2\sqrt{2}z-1}{z}\]
which has resultant $-1$. 

Note that both the numerator and the denominator of $\phi(z)$ are monic and that the degree of the denominator is half the degree of the numerator. By \cite[Remark 3.4]{molnar}, $\phi(z)$ is minimal according to the algorithm of Bruin and Molnar.
\end{example}

\begin{example} Let $\phi(z)=\frac{z^2-3z}{-z+1}$. $\Res(\phi)=-2$, and we verify that $\lambda_1\lambda_2=(-3)(-1)\equiv 1 \pmod{2}$. Now, conjugate first by $f(z)=z-1$, then by $g(z)=\sqrt{2}$ to get \[(\phi^f)^g=\frac{z^2-3\sqrt{2}z+3}{-z+\sqrt{2}}\]
which has resultant $1$. 

We are working with the $2$-adic valuation, so $v(\Res_d(F,G)) =  1<2=d$. By Lemma \ref{minimal}, $\phi(z)$ is found by the algorithm of \cite{molnar} to be locally minimal.

\end{example}

Finally, consider the following example which has a resultant divisible by more than one prime $p$:

\begin{example} Let $\phi(z)=\frac{z^2+13z}{z+1}$. $\Res(\phi)=-12$, and we verify that $\lambda_1\lambda_2=(1)(13)\equiv 1 \pmod{2}$ and also $\lambda_1\lambda_2\equiv 1\pmod{3}.$ Now, conjugate first by $f(z)=z-1$, then by $g(z)=\sqrt{12}$ to get 
\[(\phi^f)^g=\frac{z^2+2\sqrt{3}z-1}{z}\]
which has resultant -1. 

Note that both the numerator and the denominator of $\phi(z)$ are monic and that the degree of the denominator is half the degree of the numerator. Again by \cite[Remark 3.4]{molnar}, $\phi(z)$ is considered to be minimal.

\end{example}

\section{Acknowledgements}

I am grateful for my advisor, Michelle Manes. Her endless patience and invaluable guidance are unparalleled.

\end{document}